\numberwithin{equation}{section}
\theoremstyle{plain}
\newtheorem{thm}{Theorem}
\newtheorem{lem}[thm]{Lemma}
\newtheorem{dfn}[thm]{Definition}
\newtheorem{rem}[thm]{Remark}
\newtheorem{fact}[thm]{Fact}
\newtheorem{exa}[thm]{Example}
\begin{document}

\title{Ricci flow of discrete surfaces of revolution, and relation to constant Gaussian curvature}
\author{Naoya Suda}
\date{}
\maketitle

\begin{abstract}
Giving explicit parametrizations of discrete constant Gaussian curvature surfaces of revolution that are defined from an integrable systems approach, we study Ricci flow for discrete surfaces, and see how discrete surfaces of revolution have a geometric realization for the Ricci flow that approaches the constant Gaussian curvature surfaces we have 
parametrized.
\end{abstract}

\section{Introduction}
Since Ricci flow is an intrinsic geometric flow, there arises the separate problem of visualizing the Ricci flow as a flow of actual isometric immersions. In particular, for surfaces of revolution in Euclidean $3$-space $\mathbb{R}^3$, there are studies on how Ricci flow can be visualized. Apart from that, there are numerous studies of Ricci flow for discretization using triangulation. Discrete surfaces defined rather via integrable systems are less directly connected with variational problems and geometric flows, but by restricting such discrete surfaces to surfaces of revolution, we are able to analyze Ricci flow. Unlike the smooth case, the Ricci flow which we consider depends on geometric realization, and the flow toward constant Gaussian curvature (we abbreviate this to CGC) discrete surfaces of revolution with singularities such as cones and cuspidal edges will be seen here by numerical calculations. Furthermore, we will describe explicit parametrizations of discrete CGC surfaces of revolution, and using them, we will see relations between discrete and smooth CGC surfaces of revolution.

\section{The Ricci flow for smooth manifolds}
First, we describe what Ricci flow is. Let $M^{n}$ be an $n$-dimensional Riemannian manifold and let $g_{ij}$ and $R_{ij}$ be the metric tensor and Ricci tensor, respectively.
	
	The Ricci flow satisfies
\begin{equation}
\frac{\partial}{\partial t}g_{ij}=-2R_{ij}  .
\end{equation}
This equation is a second order nonlinear partial defferential equation and was introduced by R. S. Hamilton in 1982 (see \cite{Ham}). 

	When $n=2$, we have $R_{ij}=Kg_{ij}$ for the Gaussian curvature $K$. So the Ricci flow becomes 
\begin{equation}
\frac{\partial}{\partial t}g_{ij}=-2Kg_{ij}  . \label{eq:flow1}
\end{equation}
Defining 
\begin{equation}
r:=\frac{\int 2K dV}{\int dV},
\end{equation}
we have the following equation
\begin{equation}
\frac{\partial}{\partial t}g_{ij}=(r-2K)g_{ij}  
\end{equation}
for what is called the normalized Ricci flow. For this flow, the $n$-dimensional volume $\int dV$, in particular the area when $n=2$, remains constant with respect to time $t$. The following result has been proved (see \cite{Cao, Chow}).

\begin{fact}
Let $M^{2}$ be a $2$-dimensional compact Riemannian manifold with Riemannian metric $g_0$. Then, for the normalized Ricci flow, the solution $g(t)$ satisfying $g(0)=g_0$ exists uniquely for all time $t$. Moreover, when we take $t\rightarrow\infty$, $g(t)$ converges to a metric $g(\infty)$ which has constant Gaussian curvature.
\end{fact}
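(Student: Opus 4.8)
The plan is to exploit the fact that in dimension two the Ricci flow is \emph{conformal}, reducing the tensor equation \eqref{eq:flow1} to a single scalar parabolic equation. Since the right-hand side $(r-2K)g$ is a pointwise scalar multiple of $g$, the flow preserves the conformal class, so we may write $g(t)=e^{2u(t)}g_0$ with $u(0)=0$. Recalling that under a conformal change the Gaussian curvature transforms by $K=e^{-2u}(K_0-\Delta_0 u)$, where $\Delta_0$ and $K_0$ are the Laplacian and Gaussian curvature of the fixed background $g_0$, the normalized flow becomes
\begin{equation}
\frac{\partial u}{\partial t}=\frac{r}{2}+e^{-2u}(\Delta_0 u-K_0).
\end{equation}
This is a quasilinear parabolic equation on the compact surface $M^2$, so standard parabolic theory gives short-time existence and uniqueness of a smooth solution. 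The Gauss--Bonnet theorem fixes $\int_M 2K\,dV=4\pi\chi(M)$, and since the normalized flow keeps the area constant, $r$ is independent of $t$ and has the same sign as $\chi(M)$.

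Next I would record the evolution of the scalar curvature $R:=2K$. A direct computation from \eqref{eq:flow1} yields the reaction--diffusion equation
\begin{equation}
\frac{\partial R}{\partial t}=\Delta R+R(R-r),
\end{equation}
where $\Delta$ is the (time-dependent) Laplacian of $g(t)$. Applying the maximum principle and comparing $R_{\min}(t)$ and $R_{\max}(t)$ with solutions of the ordinary differential equation $\dot\rho=\rho(\rho-r)$ controls the curvature. When $\chi(M)<0$ (so $r<0$) this comparison forces $R\to r$ exponentially fast; when $\chi(M)=0$ (so $r=0$) one obtains $R\to 0$ at a polynomial rate. In both cases, uniform bounds on $R$ and its derivatives, obtained by bootstrapping through Schauder estimates, rule out finite-time singularities, giving long-time existence and subsequential convergence of $g(t)$ to a limit metric of constant curvature.

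The hard part is the case $\chi(M)>0$ (so $r>0$, i.e. $M$ is the sphere), where the reaction term $R(R-r)$ is no longer sign-favorable and the naive maximum principle does not close. Following Hamilton and Chow, the strategy is first to show that the curvature becomes everywhere positive in finite time, so that one may assume $R>0$; then to establish the differential Harnack (Li--Yau--Hamilton) inequality for $R$, and to introduce the entropy
\begin{equation}
N(t)=\int_M R\log R\,dV,
\end{equation}
proving it is monotone along the flow. The monotonicity, combined with the Harnack estimate, shows that the flow converges to a \emph{gradient Ricci soliton}. The final step is that on the sphere every such soliton is trivial: the soliton vector field is conformal Killing, and a Kazdan--Warner type integral identity forces it to vanish, so the limit metric is the round one.

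The main obstacle I expect is precisely this positive case: the Harnack inequality, whose derivation is the most technical ingredient, and Chow's extension of Hamilton's argument to initial metrics that are not already positively curved. Once convergence is established in all three cases, the limit $g(\infty)$ is a metric of constant Gaussian curvature in the conformal class of $g_0$, which completes the statement.
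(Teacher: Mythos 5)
The paper offers no proof of this Fact at all: it is quoted as a known result with citations to Cao--Zhu and Chow--Knopf, so there is no internal argument to compare against. Your sketch is an essentially correct outline of exactly the proof contained in those cited references --- the conformal reduction $g(t)=e^{2u(t)}g_0$, the evolution equation $\partial_t R=\Delta R+R(R-r)$ with maximum-principle comparison handling $\chi(M)\le 0$, and Hamilton's entropy/Harnack machinery together with Chow's extension to non-positively-curved initial data and the Kazdan--Warner identity ruling out nontrivial solitons for $\chi(M)>0$ --- so it follows the same route as the paper's sources, with the deep ingredients (Harnack inequality, entropy monotonicity) named rather than proved, as you acknowledge.
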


\section{The Ricci flow on smooth surfaces of revolution}

	For the immersion $x_{0}\colon \mathbb{R}^2 \rightarrow \mathbb{R}^3$ defined by
$$x_{0}(u,v)=(f_{0}(u)\cos v, f_{0}(u)\sin v, h_{0}(u)),$$
the induced metric $g_{0}$ is 
\begin{equation*}
g_{0}=((f_{0,u})^2+(h_{0,u})^2)du^2+(f_{0})^2dv^2.
\end{equation*}
For the solution  of Ricci flow $g(t)$ satisfying $g(0)=g_0$, we consider following equations 
\begin{equation}
\left\{ \,
\begin{aligned}
&g(t)=(f_{u}(u,t)^2+h_{u}(u,t)^2)du^2+f(u,t)^2dv^2,\\
&f(u,0)=f_{0}(u),\\
&h(u,0)=h_{0}(u).
\end{aligned}
\right.
\end{equation}
In the range of $(u,t)$ for which smooth functions $f(u,t)>0$, $h(u,t)$ satisfying  the above equations exist, there will be immersions, for each $t$, 
$$x(u,v,t)=(f(u,t)\cos v, f(u,t)\sin v, h(u,t))$$
which move as geometric realizations according to the Ricci flow.

There are results on the existence of surfaces in Euclidean $3$-space whose metrics evolve under Ricci flow. We call such surfaces global extrinsic representations, or geometric realizations, and V. E. Coll, J. Dodd and D. L. Johnson \cite{extr} have shown that a global extrinsic representation exists for the unique Ricci flow initialized by any smoothly immersed surface of revolution that is either compact or non-compact but complete satisfying certain conditions. J. H. Rubinstein and R. Sinclair \cite{visual} have shown that there exists a global extrinsic representation in $\mathbb{R}^{n+1}$ of any Ricci flow that is initialized by a metric $g_0$ such that $(S^{n}, g_0)$ can be isometrically embedded in $\mathbb{R}^{n+1}$ as a hypersurface of revolution, and they created numerical visualizations of Ricci flow. 

\section{Discrete surfaces of revolution}
As we will later consider Ricci flow of discrete surfaces of revolution, in this section we will consider $x\colon \mathbb{Z}^2\rightarrow \mathbb{R}^3$ defined by
\begin{equation}
x(m,n)=\Bigr(f(n)\cos \frac{2\pi m}{l}, f(n)\sin \frac{2\pi m}{l}, h(n)\Bigr), \label{surf}
\end{equation}
where $l$ is the number of divisions in the rotational direction. We want to define  a metric $g$ and a curvature for this discrete surface of revolution $x$. We will first give definitions for more general discrete surfaces $x\colon \mathbb{Z}^2\rightarrow \mathbb{R}^3$, coming from \cite{Bobenko, Bobenko2, Burstall, norypj, Tim}.
For a general discrete surface $x\colon \mathbb{Z}^2\rightarrow \mathbb{R}^3$, not necessarily satisfying \eqref{surf}, we sometimes denote the four vertices $x(m,n)$, $x(m+1,n)$, $x(m+1,n+1)$, $x(m,n+1)$ of a quadilateral by $x_i,x_j,x_k,x_l$, respectively, and we write $(ijkl)$ for this quadrilateral face. We regard every face $(ijkl)$ as a quadrilateral that may sometimes reduce to a triangle if two adjacent vertices become equal.

\begin{dfn}
A discrete surface $x\colon \mathbb{Z}^2\rightarrow \mathbb{R}^3$ is called a circular net if every quadrilateral $(ijkl)$ of $x$ has vertices lying on a circle.  For a circular net, to define the unit normal vector, on a quadrilateral $(ijkl)$ we have the freedom to arbitrarily define a unit normal vector $\nu_i$ at one vertex $i$, and we can repeatedly define $\nu_*$ at any adjacent vertex $*=j, l$ satisfying the following conditions
\begin{enumerate}
\item[$(1)$] $\nu_i$, $x_*-x_i$, $\nu_*$ all lie in one plane,
\item[$(2)$] $|\nu_i|=|\nu_*|=1$,
\item[$(3)$] the angles between $x_*-x_i$ and $\nu_i$, and between $x_i-x_*$ and $\nu_*$ are equal but oppositely oriented.
\end{enumerate}
Repeatedly applying this along edges, we can define a global normal $\nu\colon \mathbb{Z}^2\rightarrow S^2$, where $S^2=\{\vec{x} \in \mathbb{R}^3 \mid | \vec{x} |=1\}$.
\end{dfn}

\begin{rem}
The notion of normal in Definition 3 is well defined, see for example \cite{Bobenko2, Burstall, norypj}.
\end{rem}

\begin{dfn}
For a circular net $x$, we define the discrete partial derivatives by
\begin{equation}
x_u:=\frac{1}{2}(x_k-x_j)+\frac{1}{2}(x_l-x_i),\;x_v:=\frac{1}{2}(x_k-x_l)+\frac{1}{2}(x_j-x_i)
\end{equation}
on each face $(ijkl)$. We similarly define the partial derivatives for its normal vector field $\nu$.
\end{dfn}

\begin{dfn}
For a circular net $x$ and its unit normal vector $\nu$, we define the first fundamental form (the induced metric), second fundamental form and shape operator by
\begin{equation}
\mathrm{I}:=
\begin{pmatrix}
   x_u\cdot x_u & x_u\cdot x_v \\
   x_v\cdot x_u & x_v\cdot x_v
\end{pmatrix}
,\;\;
\mathrm{I\hspace{-1.2pt}I}:=
\begin{pmatrix}
   x_u\cdot \nu_u & x_u\cdot \nu_v \\
   x_v\cdot \nu_u & x_v\cdot \nu_v
\end{pmatrix}
,\;\;\mathit{S}:=\mathrm{I}^{-1}\rm{I\hspace{-1.2pt}I} \label{eq:metric}
\end{equation}
on each face $(ijkl)$. The eigenvalues $(k_1,k_2)$ and eigenvectors of the shape operator \it{S} are the principal curvatures and curvature line fields of the face. Furthermore, we define the Gaussian curvature and the mean curvature by
\begin{equation}
K:=k_1k_2,\;\;H:=\frac{1}{2}(k_1+k_2). \label{eq:curvd}
\end{equation}
\end{dfn}

\noindent For the $\mathrm{I}$ and $\mathit{S}$ in \eqref{eq:metric}, we sometimes write 
$$\mathrm{I}=
\begin{pmatrix}
   g_{11} & g_{12} \\
   g_{21} & g_{22}
\end{pmatrix}
,\:\:\mathit{S}=
\begin{pmatrix}
 s_{11} & s_{12} \\
 s_{21} & s_{22}
\end{pmatrix}.$$

\noindent Next, we will note that the definitions of curvatures are the same as the definitions produced by mixed area and the Steiner formula.

\begin{dfn}
For planar polygons $P=(p_0, \cdots ,p_k), Q=(q_0, \cdots ,q_k)$ with corresponding edges parallel to each other and unit normal $N$ that is perpendicular to the planes of $P$ and $Q$, we define the mixed area by
$$A(P,Q)=\frac{1}{4}\sum_{j=0}^{k}(\mathrm{det}(p_j,q_{j+1},N)+\mathrm{det}(q_j,p_{j+1},N).$$
When $P=Q$, we write $A(P)$ for $A(P,P)$.
\end{dfn}

\noindent For a circular net $x$ and its unit normal vector $\nu$, we can consider the mixed area, denoted by $A(\cdot,\cdot)_{ijkl}$, on each face $(ijkl)$.

\begin{lem}[\cite{Tim}]
For a circular net $x$ and its unit normal vector $\nu$, the mixed area on each face $(ijkl)$ satisfies 
$$A(x)_{ijkl}=\mathrm{det} (x_u,x_v,N),\;\;A(\nu)_{ijkl}=\mathrm{det}(\nu_u,\nu_v,N),$$
$$A(x,\nu)_{ijkl}=\frac{1}{2}(\mathrm{det}(x_u,\nu_v,N)+\mathrm{det}(\nu_u,x_v,N)),$$
where $N$ is the unit normal for the face.
\end{lem}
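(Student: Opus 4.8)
The plan is to reduce all three identities to the single-polygon case together with the multilinearity and antisymmetry of the determinant, treating the mixed-area functional $A(\cdot,\cdot)$ as a symmetric bilinear form on parallel planar quadrilaterals. Concretely, I would first prove $A(x)_{ijkl}=\det(x_u,x_v,N)$, then obtain $A(\nu)_{ijkl}=\det(\nu_u,\nu_v,N)$ by the identical argument applied to the Gauss image, and finally derive the mixed formula by polarization.

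For the single-polygon identity, I would start from the definition of the mixed area with $P=Q$ taken to be the quadrilateral $(x_i,x_j,x_k,x_l)$. The two determinants in each summand coincide, so
\[
A(x)_{ijkl}=\tfrac12\bigl(\det(x_i,x_j,N)+\det(x_j,x_k,N)+\det(x_k,x_l,N)+\det(x_l,x_i,N)\bigr),
\]
which is the shoelace expression for the signed area of the planar quadrilateral with normal $N$. The geometric input I would then use is the elementary fact that this signed area equals half the determinant of the two diagonals against $N$, namely $\tfrac12\det(x_k-x_i,\,x_l-x_j,\,N)$; this follows by expanding both sides and matching the eight resulting monomials. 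It remains to connect the diagonals to the discrete derivatives. From the definition of $x_u$ and $x_v$ one computes directly that $x_u+x_v=x_k-x_i$ and $x_u-x_v=x_l-x_j$, so by multilinearity and antisymmetry
\[
\det(x_k-x_i,\,x_l-x_j,\,N)=\det(x_u+x_v,\,x_u-x_v,\,N)=-2\det(x_u,x_v,N),
\]
whence $A(x)_{ijkl}=\det(x_u,x_v,N)$ once the orientation of $N$ relative to the cyclic ordering $i\to j\to k\to l$ is fixed so as to absorb the sign.

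For $A(\nu)_{ijkl}$ the computation is formally identical, provided the Gauss-image quadrilateral $(\nu_i,\nu_j,\nu_k,\nu_l)$ is itself planar with the same unit normal $N$, since only then is the mixed area defined for it. Here I would invoke the structural property of the normal built in Definition 3: conditions $(1)$--$(3)$ force $\nu$ to be edge-parallel to $x$, so that each edge $\nu_*-\nu_\bullet$ is a scalar multiple of the corresponding edge $x_*-x_\bullet$. Because the edges of the circular quadrilateral all lie in the plane orthogonal to $N$, edge-parallelism gives $(\nu_j-\nu_i)\cdot N=(\nu_k-\nu_j)\cdot N=(\nu_l-\nu_k)\cdot N=0$; hence all four normals share the same $N$-component and the Gauss image is planar with normal $N$. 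The preceding argument then yields $A(\nu)_{ijkl}=\det(\nu_u,\nu_v,N)$ verbatim.

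Finally, for the genuinely mixed term I would use polarization. Expanding $A(P+Q,P+Q)$ directly from its definition gives the identity $A(P+Q)=A(P)+A(Q)+2A(P,Q)$, so that $A(x,\nu)_{ijkl}=\tfrac12\bigl(A(x+\nu)-A(x)-A(\nu)\bigr)$. Since edge-parallelism of $\nu$ to $x$ makes the summed net $x+\nu$ edge-parallel to $x$ as well, its quadrilateral is again planar with normal $N$, and the single-polygon result applies to give $A(x+\nu)=\det(x_u+\nu_u,\,x_v+\nu_v,\,N)$. Expanding this determinant by bilinearity, the terms $\det(x_u,x_v,N)=A(x)$ and $\det(\nu_u,\nu_v,N)=A(\nu)$ cancel against the subtracted quantities, leaving exactly $A(x,\nu)_{ijkl}=\tfrac12\bigl(\det(x_u,\nu_v,N)+\det(\nu_u,x_v,N)\bigr)$. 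The one genuinely nontrivial point, and the step I expect to be the main obstacle, is establishing the planarity of the Gauss image (and of $x+\nu$) with the common normal $N$; everything else is bookkeeping with multilinear algebra under a fixed orientation convention.
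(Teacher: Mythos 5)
The paper never proves this lemma --- it is quoted from \cite{Tim} --- so there is no internal proof to compare against; what you have written is a reconstruction of the standard argument, and its skeleton is sound. The individual reductions check out: for $P=Q$ the two determinants in each summand of Definition 6 coincide, giving the shoelace sum; the shoelace sum equals $\tfrac12\det(x_k-x_i,\,x_l-x_j,\,N)$; and $x_u+x_v=x_k-x_i$, $x_u-x_v=x_l-x_j$ convert this into a determinant of the discrete derivatives. Your structural input for the other two identities is also the right one and is correctly justified: conditions $(1)$--$(3)$ in the definition of the circular-net normal force $\nu_j-\nu_i$ to be parallel to $x_j-x_i$ (each $\nu_j$ is the reflection of $\nu_i$ in the plane perpendicular to the edge --- exactly the recursion for $a(n+1)$, $b(n+1)$ written out in Section 4), so the Gauss-image quad and the quad of $x+\nu$ are planar with the same normal $N$, and polarization of the bilinear form $A(\cdot,\cdot)$ then yields the mixed identity.

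The genuine flaw is your handling of the sign, and it is not a point one can wave away: your own computation gives $A(x)_{ijkl}=\tfrac12\det(x_u+x_v,\,x_u-x_v,\,N)=-\det(x_u,x_v,N)$, and the proposed repair --- ``fix the orientation of $N$ so as to absorb the sign'' --- cannot work, because $N$ enters \emph{both} sides of the identity: the mixed area $A(\cdot,\cdot)$ is itself defined by determinants against $N$, so replacing $N$ by $-N$ flips $A(x)_{ijkl}$ and $\det(x_u,x_v,N)$ simultaneously and the discrepancy survives. A concrete check: for the unit square $x_i=(0,0,0)$, $x_j=(1,0,0)$, $x_k=(1,1,0)$, $x_l=(0,1,0)$ one has $x_u=(0,1,0)$, $x_v=(1,0,0)$, and with $N=(0,0,1)$ the shoelace gives $A=+1$ while $\det(x_u,x_v,N)=-1$; with $N=(0,0,-1)$ the values are $-1$ and $+1$. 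The mismatch lies not in $N$ but between the traversal order $i\to j\to k\to l$ used in Definition 6 and the assignment of $u$ and $v$ in Definition 4 (the paper's $x_u$ is the derivative in the $n$-direction and $x_v$ in the $m$-direction), so what actually holds under the paper's conventions is $A(x)_{ijkl}=\det(x_v,x_u,N)$, and likewise with a uniform minus sign in the other two identities. The correct way to finish your proof is to state this convention fix explicitly (swap the roles of $u$ and $v$, or reverse the polygon's traversal); since the flip is the same in all three formulas, the Steiner formula of the next lemma and every later use (only ratios and the positive quantity $\sqrt{g_{11}g_{22}}$ appear) are unaffected. As written, though, the sign-absorption step is a step that fails, not a convention one is free to invoke.
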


\begin{lem}[\cite{Tim}]
For a circular net $x$ and its unit normal vector $\nu$, we have
\begin{equation*}
\begin{split}
A(x+t\nu)_{ijkl}&=A(x)_{ijkl}+2tA(x,\nu)_{ijkl}+t^2A(\nu)_{ijkl}\\
&=(1+2tH+t^2K)A(x)_{ijkl}
\end{split}
\end{equation*}
for $t\in\mathbb{R}$ on each face $(ijkl)$, and for $H$ and $K$ as in \eqref{eq:curvd}.
\end{lem}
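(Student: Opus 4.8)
The plan is to prove the two equalities separately. The first is a purely formal consequence of the bilinearity and symmetry of the mixed area, whereas the second is where the geometry enters, through Lemma 8 and the shape operator $S$ of Definition 6.

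First I would record that the mixed area $A(\cdot,\cdot)_{ijkl}$ of Definition 7 is a symmetric bilinear form in its two polygon arguments. Linearity in each slot is inherited from the multilinearity of the determinant: a vertexwise sum $p_j\mapsto p_j+p_j'$ splits each term $\mathrm{det}(p_j,q_{j+1},N)$ additively, and a scaling $p_j\mapsto sp_j$ pulls out the factor $s$. Symmetry is immediate, since interchanging $P$ and $Q$ merely swaps the two summands $\mathrm{det}(p_j,q_{j+1},N)$ and $\mathrm{det}(q_j,p_{j+1},N)$. I would also note that the three polygons $x$, $\nu$, $x+t\nu$ on a face $(ijkl)$ admit the common face normal $N$: the edges of $\nu$ are parallel to the corresponding edges of $x$ for the normal of a circular net, so the edges of $x+t\nu$, namely $(x_{j+1}-x_j)+t(\nu_{j+1}-\nu_j)$, again lie in the plane perpendicular to $N$. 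Hence $A(x+t\nu)_{ijkl}=A(x+t\nu,\,x+t\nu)_{ijkl}$ is a legitimate mixed area for this $N$, and expanding the quadratic by bilinearity and symmetry gives the first equality
\[
A(x+t\nu)_{ijkl}=A(x)_{ijkl}+2tA(x,\nu)_{ijkl}+t^2A(\nu)_{ijkl}.
\]

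For the second equality it suffices, by matching the coefficients of $1$, $t$ and $t^2$, to prove the pointwise identities $A(x,\nu)_{ijkl}=H\,A(x)_{ijkl}$ and $A(\nu)_{ijkl}=K\,A(x)_{ijkl}$. Here I would pass to the determinant expressions furnished by Lemma 8. The structural input is that the same edge-parallelism forces the discrete derivatives $\nu_u,\nu_v$ to lie in the tangent plane $\mathrm{span}\{x_u,x_v\}$; writing $\nu_u=ax_u+bx_v$ and $\nu_v=cx_u+dx_v$ and assembling the inner products $x_\alpha\cdot\nu_\beta$ shows that the second fundamental form factors as $\mathrm{I\hspace{-1.2pt}I}=\mathrm{I}\,M^{\mathsf{T}}$ with $M=\begin{pmatrix}a&b\\c&d\end{pmatrix}$, so that $S=\mathrm{I}^{-1}\mathrm{I\hspace{-1.2pt}I}=M^{\mathsf{T}}$ and therefore $K=\det S=ad-bc$, $\,2H=\mathrm{tr}\,S=a+d$. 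Substituting $\nu_u,\nu_v$ into the determinants of Lemma 8 and using multilinearity together with $\mathrm{det}(x_u,x_u,N)=\mathrm{det}(x_v,x_v,N)=0$ gives $\mathrm{det}(\nu_u,\nu_v,N)=(ad-bc)\,\mathrm{det}(x_u,x_v,N)$, hence $A(\nu)_{ijkl}=K\,A(x)_{ijkl}$, and likewise $\mathrm{det}(x_u,\nu_v,N)=d\,\mathrm{det}(x_u,x_v,N)$ and $\mathrm{det}(\nu_u,x_v,N)=a\,\mathrm{det}(x_u,x_v,N)$, whence $A(x,\nu)_{ijkl}=\tfrac12(a+d)\,\mathrm{det}(x_u,x_v,N)=H\,A(x)_{ijkl}$. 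Combining the two identities yields $(1+2tH+t^2K)\,A(x)_{ijkl}$, as claimed.

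The step I expect to be the main obstacle is the one that makes everything else routine: justifying that $\nu_u,\nu_v\in\mathrm{span}\{x_u,x_v\}$ and that a single normal $N$ serves all three mixed areas. Both rest on the parallelism of corresponding edges of $x$ and $\nu$, which is the defining feature of the normal of a circular net built in Definition 3; rather than reprove it I would invoke \cite{Bobenko2, Burstall, norypj} as in Remark 4. A secondary point to settle is the genericity hypothesis that $x_u,x_v$ be linearly independent, so that $M$ and hence $S$ is well defined on the face; on degenerate faces where the quadrilateral collapses to a triangle I would obtain the identities by continuity, both sides being polynomial in the vertex data.
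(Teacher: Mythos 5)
Your proposal is correct, but note that the paper itself offers no proof of this lemma: it is imported verbatim from \cite{Tim} (as the bracketed citation in the lemma header indicates), so your argument is a self-contained reconstruction rather than a variant of anything in this paper. What you wrote is essentially the argument of \cite{Tim}: the split into (i) the formal expansion $A(x+t\nu)=A(x)+2tA(x,\nu)+t^2A(\nu)$ via bilinearity and symmetry of the mixed area, and (ii) the geometric identities $A(x,\nu)=H\,A(x)$ and $A(\nu)=K\,A(x)$, is exactly how the Steiner formula is established there, and you correctly isolate the one nontrivial input, namely that conditions $(1)$--$(3)$ of Definition 3 force the Gauss-map edges $\nu_j-\nu_i$ to be parallel to $x_j-x_i$, so that the $\nu$-face is planar with the same normal $N$ and $\nu_u,\nu_v\in\mathrm{span}\{x_u,x_v\}$. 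Your matrix computation also checks out: with $\nu_u=ax_u+bx_v$, $\nu_v=cx_u+dx_v$ one gets $\mathrm{I\hspace{-1.2pt}I}=\mathrm{I}\,M^{\mathsf{T}}$, hence $S=M^{\mathsf{T}}$, $\det S=ad-bc=K$ and $\operatorname{tr}S=a+d=2H$ (these equal $k_1k_2$ and $k_1+k_2$ whether or not the eigenvalues are real), and multilinearity of the determinant then yields both identities from Lemma 8. Two small remarks: the continuity fallback for triangular faces is unnecessary, since your direct computation uses only invertibility of $\mathrm{I}$ (which the lemma implicitly assumes anyway, else $S$, $H$, $K$ in \eqref{eq:curvd} are undefined) and not that the face is a genuine quadrilateral; and strictly speaking the edge-parallelism gives $\nu_j-\nu_i$ \emph{anti}-parallel or parallel to $x_j-x_i$ depending on the configuration, which is harmless here since only the linear span enters.
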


We now consider the surface as in \eqref{surf}. Since every quadrilateral of a discrete surface of revolution is an isosceles trapezoid, its vertices lie on a circle. Therefore the surface \eqref{surf} is a circular net. Considering the three conditions of Definition 3, and wishing to preserve symmetry, the unit normal vector $\nu$ can be written as
$$\nu(n,m)=\Bigr(a(n)\cos \frac{2\pi m}{l}, a(n)\sin \frac{2\pi m}{l}, b(n)\Bigr),$$
where $a(n)$ and $b(n)$ satisfy
$$\left\{ \,
\begin{aligned}
&a(n+1)=\\
&a(n)-\frac{2(f(n+1)-f(n))(a(n)(f(n+1)-f(n))+b(n)(h(n+1)-h(n))}{(f(n+1)-f(n))^2+(h(n+1)-h(n))^2},\\
&b(n+1)=\\
&b(n)-\frac{2(h(n+1)-h(n))(a(n)(f(n+1)-f(n))+b(n)(h(n+1)-h(n))}{(f(n+1)-f(n))^2+(h(n+1)-h(n))^2}.
\end{aligned}
\right.$$

\begin{rem}
When $f(k)=0$ ($k\in\mathbb{Z}$), the face with  $x(m,k-1),x(m+1,k-1),x(m+1,k),x(m,k)$ as vertices is a triangle. In this case, when defining the unit normal vector $\nu$ as above, even though $x(m,k)$ is the same point for any $m$, $\nu(m,k)$ will depend on $m$. 
\end{rem}

The metric and curvatures are 
$$g_{11}=(f(n+1)-f(n))^2\cos^2 \frac{\pi}{l}+(h(n+1)-h(n))^2,$$
$$g_{12}=g_{21}=0,\;\:g_{22}=(f(n+1)+f(n))^2\sin^2 \frac{\pi}{l},$$
\begin{equation}
K=\frac{a(n+1)^2-a(n)^2}{f(n+1)^2-f(n)^2},\;\;H=\frac{f(n+1)a(n+1)-f(n)a(n)}{f(n+1)^2-f(n)^2}. \label{eq:curv}
\end{equation}
(See also \cite{Bobenko}.) Note that the metric and curvatures are independent of  the rotational parameter $m$. Furthermore by Lemma 7, we have $A(x)_{ijkl}=\sqrt{g_{11}g_{22}}$. 
Then the area $A(x)_{ijkl}(n,m)$ is also independent of $m$, so we may denote it simply as $A(x)(n)$ when all is clear from context.

\section{Comparison of smooth and discrete CGC surfaces of revolution}
In this next section we will define the flow expected to converge to a discrete CGC surface of revolution. In this section, in preparation for that, we describe a parametrization of discrete CGC surfaces and compare them with the smooth CGC surfaces of revolution.

For the smooth immersion defined by
$$x(u,v)=(f(u)\cos v, f(u)\sin v, h(u)),$$
the unit normal vector can be written as
$$\nu(u,v)=(a(u)\cos v, a(u)\sin v, b(u)),$$
where $a(u)=\frac{h_{u}}{\sqrt{f_{u}^2+h_{u}^2}}$ and $b(u)=\frac{-f_{u}}{\sqrt{f_{u}^2+h_{u}^2}}$. 
Then the Gaussian curvature is 
$$K=\frac{(a^2)_{u}}{(f^2)_{u}}.$$
When $K$ is constant, we have
$$K=\frac{a(\tilde{u})^2-a(u)^2}{f(\tilde{u})^2-f(u)^2}$$
for all $u,\tilde{u}$ with $\tilde{u}>u$.
Therefore, setting $\tilde{u}=u_{n+1}$ and $u=u_{n}$, we have 
$$K=\frac{a(u_{n+1})^2-a(u_{n})^2}{f(u_{n+1})^2-f(u_{n})^2}$$
for the functions $a(u)$ and $f(u)$ used in the smooth case. This is perfectly analogous to the equation \eqref{eq:curv} for $K$ in the discrete case, where \eqref{eq:curv} uses the functions $a(n)$ and $f(n)$ given for the discrete case. We conclude that the functions $a(\cdot)$ and $f(\cdot)$ can be taken to be the same in both the smooth and discrete cases, with appropriate choices of the $u_{n}$. 

In the smooth case, CGC $K=1$ surfaces of revolution can be parameterized using
$$(f(u),h(u))=\Bigr(p\cos u,\int_{0}^{u} \sqrt{1-p^2\sin^2 s}\;ds\Bigr)$$
for $p$ a real parameter, and the unit normal vector can be parameterized by
$$(a(u),b(u))=(\sqrt{1-p^2\sin^2 u},p\sin u)$$
 (see \cite{Umehara}). As noted above, we can parametrize discrete CGC $K=1$ surfaces of revolution using these $f(u)$ and $a(u)$, because $b(u)$ and $h(u)$ can be determined from $f(u)$ and $a(u)$. This is clearly so for $b(u)$ (up to sign), and we now explain how to determine $h(u)$ as well. We now rewrite $f(n)$, $a(n)$, $b(n)$ as $f(u_n)$, $a(u_n)$, $b(u_n)$, respectively. From the definition of unit normal vectors of circular nets, we have
$$\frac{h(n+1)-h(n)}{f(n+1)-f(n)}=\frac{b(n+1)-b(n)}{a(n+1)-a(n)},$$
and so
\begin{equation}
h(n)=h(0)+\sum_{i=1}^{n} \frac{b(i)-b(i-1)}{a(i)-a(i-1)}(f(i)-f(i-1)) \label{eq:h}
\end{equation}
is also determined. Thus, we can prove the following result.

\begin{thm}
For a discrete CGC $K=1$ surface of revolution as in \eqref{surf}, we assume that 
$$f(n)\geq 0$$
for all $k_1\leq n\leq k_2$ for some integers $k_1\leq0$ and $k_2\geq0$. If we have 
$$f(0)=p,\;h(0)=0,\;a(0)=1,\;b(0)=0,$$
$$a(n)=\sqrt{1-p^2\sin^2 u_n},\;b(n)=p\sin u_n$$
for some $p>0$ and some sequence $-\pi/2\leq u_{k_1}<\cdots<u_{-1}<u_0<u_1<\cdots<u_{k_2}\leq\pi/2$, then the profile curve of the surface is given by
\begin{multline}
(f(n),h(n))=\\
\Bigr(p\cos u_{n},\sum_{i=1}^{n} \frac{(p\sin u_{i}-p\sin u_{i-1})(p\cos u_{i}-p\cos u_{i-1})}{\sqrt{1-p^2\sin^2 u_{i}}-\sqrt{1-p^2\sin^2 u_{i-1}}}\Bigr)\;\; (0<n\leq k_2),
\end{multline}
\begin{multline}
(f(n),h(n))=\\
\Bigr(p\cos u_{n},\sum_{i=1}^{-n} \frac{(p\sin u_{-i}-p\sin u_{-i+1})(p\cos u_{-i}-p\cos u_{-i+1})}{\sqrt{1-p^2\sin^2 u_{-i}}-\sqrt{1-p^2\sin^2 u_{-i+1}}}\Bigr)\;\; (k_1\leq n<0).
\end{multline}
\end{thm}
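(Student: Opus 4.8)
The plan is to treat the two coordinate functions separately: $f(n)$ is pinned down purely algebraically by the constant curvature condition together with the telescoping structure of \eqref{eq:curv}, while $h(n)$ is then read off directly from the reconstruction formula \eqref{eq:h}. Since $a(n)$ and $b(n)$ are prescribed explicitly as functions of the $u_n$, the whole argument reduces to a substitution once $f(n)=p\cos u_n$ is established, so the theorem is really a verification that the claimed profile curve is forced by the hypotheses.

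First I would derive the $f$-component. Setting $K=1$ in the curvature formula \eqref{eq:curv} gives the relation $a(n+1)^2-a(n)^2=f(n+1)^2-f(n)^2$ on every strip. This is a telescoping difference, so summing from $0$ to $n$ and using the initial data $f(0)=p$, $a(0)=1$ yields $f(n)^2=p^2+a(n)^2-1$. Substituting $a(n)^2=1-p^2\sin^2 u_n$ collapses this to $f(n)^2=p^2-p^2\sin^2 u_n=p^2\cos^2 u_n$. Because $p>0$, each $u_n$ lies in $[-\pi/2,\pi/2]$ so that $\cos u_n\geq 0$, and $f(n)\geq 0$ by hypothesis; hence I may take the positive square root without ambiguity to conclude $f(n)=p\cos u_n$. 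Note that this step needs no division and so holds for every admissible index.

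With $f(n)=p\cos u_n$ in hand, the $h$-component is obtained by feeding the explicit expressions for $a(i)$, $b(i)$ and the now-known $f(i)$ into \eqref{eq:h}. Using $h(0)=0$, the sum $\sum_{i=1}^n \frac{b(i)-b(i-1)}{a(i)-a(i-1)}(f(i)-f(i-1))$ becomes exactly the stated formula once one writes $b(i)-b(i-1)=p\sin u_i-p\sin u_{i-1}$, $a(i)-a(i-1)=\sqrt{1-p^2\sin^2 u_i}-\sqrt{1-p^2\sin^2 u_{i-1}}$, and $f(i)-f(i-1)=p\cos u_i-p\cos u_{i-1}$. The negative-index range is handled by the same device, except that \eqref{eq:h} must be summed in the decreasing direction; reindexing $j\mapsto -i$ and tracking the three sign reversals coming from the differences in the numerator and denominator reproduces the second displayed formula.

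The one genuine point requiring care---and the step I would flag as the main obstacle---is the well-definedness of the ratio in \eqref{eq:h}, which demands $a(i)\neq a(i-1)$, i.e.\ $\sin^2 u_i\neq \sin^2 u_{i-1}$. Since the $u_n$ are strictly increasing in $[-\pi/2,\pi/2]$, this fails only when $u_{i-1}=-u_i$, a symmetric crossing through the waist $u=0$; I would either assume the prescribed sequence avoids such pairs or interpret the corresponding term as a limiting value. Away from this degeneracy the argument is a direct substitution, so the essential content of the theorem is the telescoping identity that forces $f(n)=p\cos u_n$.
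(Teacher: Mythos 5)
Your proof is correct and follows essentially the same route as the paper's: the paper establishes $f(n)=p\cos u_n$ by induction on the face-wise relation $f(n+1)^2-f(n)^2=a(n+1)^2-a(n)^2$ (your telescoped sum is just a repackaging of that induction, equally valid for both index ranges), and then, exactly as you do, reads off $h$ from \eqref{eq:h}. Your closing observation about the degenerate case $u_{i-1}=-u_i$, where $a(i)=a(i-1)$ makes the ratio in \eqref{eq:h} ill-defined, is a genuine point that the paper's proof silently ignores; your proposed fix (excluding such pairs or taking the limiting value of the summand, which is $2\sqrt{1-p^2\sin^2 u_i}\,\tan u_i$ and is consistent with the face curvature) is sound.
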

\begin{proof}
When $n=1$, from $f(1)\geq0$ and the difference equation
$$1=\frac{a(1)^2-a(0)^2}{f(1)^2-f(0)^2},$$
we have $f(1)=p\cos u_1$.
Inductively, on $0\leq n\leq k_2$, we find that $f(n)=p\cos u_n$.
Considering in the same way, we also have $f(n)=p\cos u_n$ for $k_1\leq n\leq0$. Then the function $h$ is determined by \eqref{eq:h}.
\end{proof}

\begin{rem}
More generally, for Gaussian curvature $K=c$ for $c>0$, we have the functions
\begin{multline}
(f(n),h(n))=\Bigr(p\cos(\sqrt{c}u_{n}),\\
\sum_{i=1}^{n} \frac{(p\sqrt{c}\sin(\sqrt{c}u_{i})-p\sqrt{c}\sin(\sqrt{c}u_{i-1}))(p\cos(\sqrt{c}u_{i})-p\cos(\sqrt{c}u_{i-1}))}{\sqrt{1-p^2c\sin^2(\sqrt{c}u_{i})}-\sqrt{1-p^2c\sin^2(\sqrt{c}u_{i-1})}}\Bigr), \label{eq:CGC}
\end{multline}
\begin{equation}
(a(n),b(n))=\bigr(\sqrt{1-p^2c\sin^2(\sqrt{c}u_{n})},p\sqrt{c}\sin(\sqrt{c}u_{n})\bigr)\label{eq:gaCGC1}
\end{equation}
which can parameterize CGC $K=c$ surfaces of revolution and their normal vectors. Figure \ref{fig:CGC1} shows smooth and discrete CGC $K=1$ surfaces of revolution.
\end{rem}

\begin{figure}[htbp]
\hspace{-0.3cm}
 \centering
\begin{minipage}[t]{0.2\textwidth}
 \includegraphics[width=1.3cm]{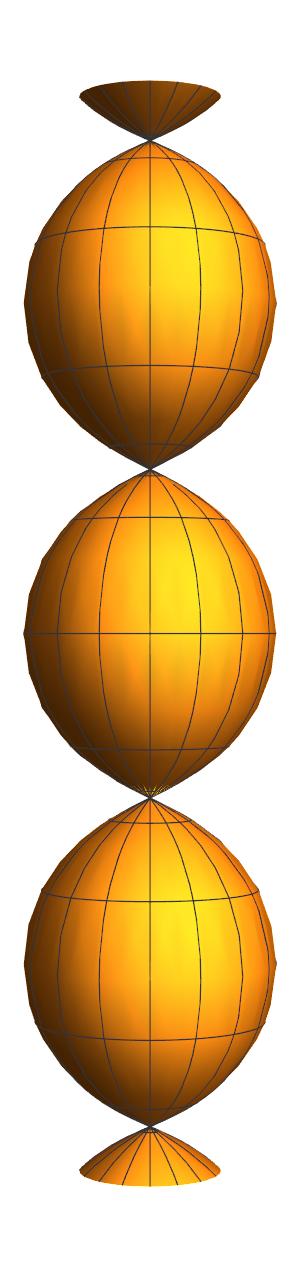}
\end{minipage}
\begin{minipage}[t]{0.2\textwidth}
 \includegraphics[width=2.0cm]{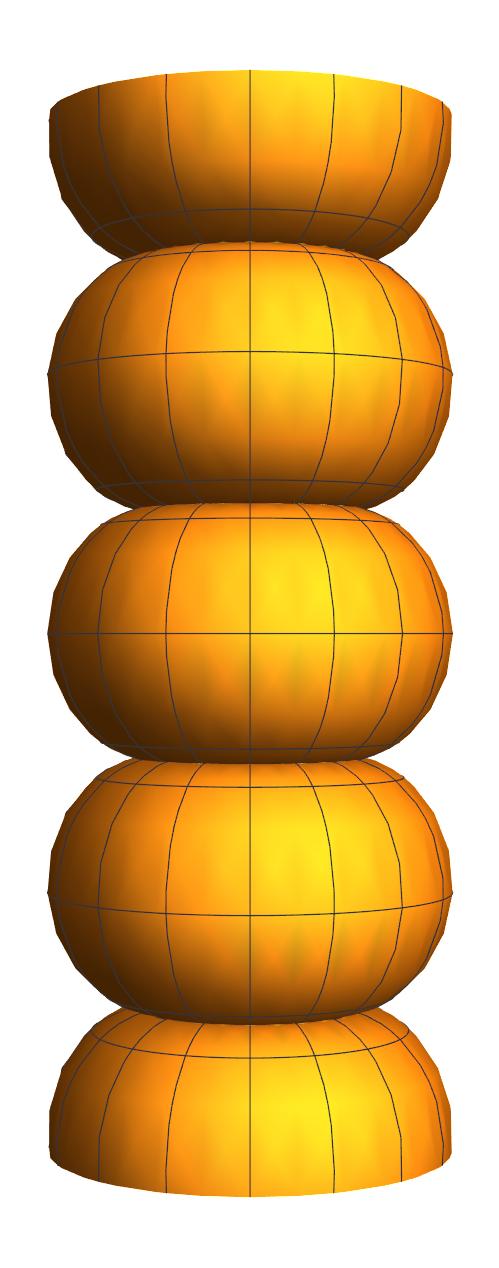}
\end{minipage}
\begin{minipage}[t]{0.23\textwidth}
 \includegraphics[width=2.7cm,]{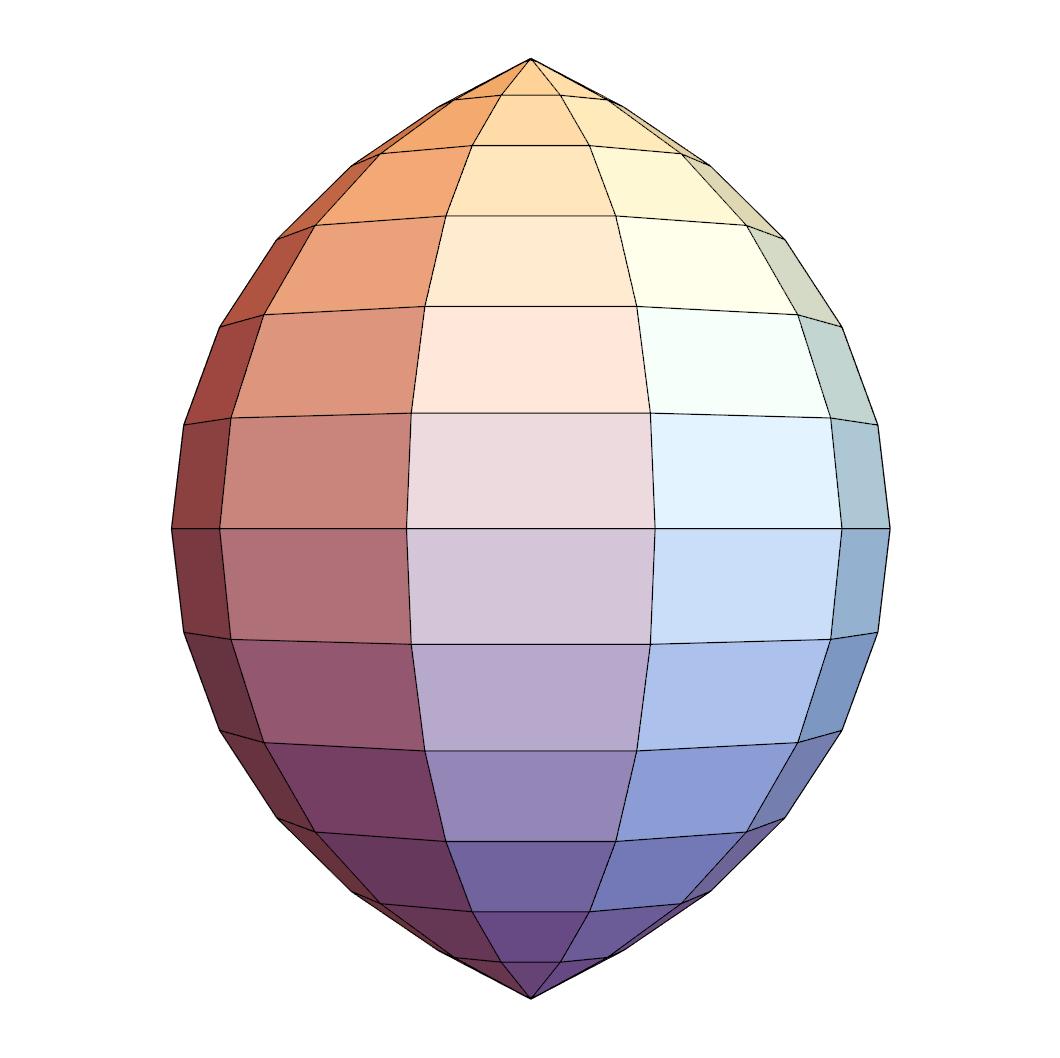}
\end{minipage}
\hspace{0.3cm}
\begin{minipage}[t]{0.23\textwidth}
 \includegraphics[width=2.7cm]{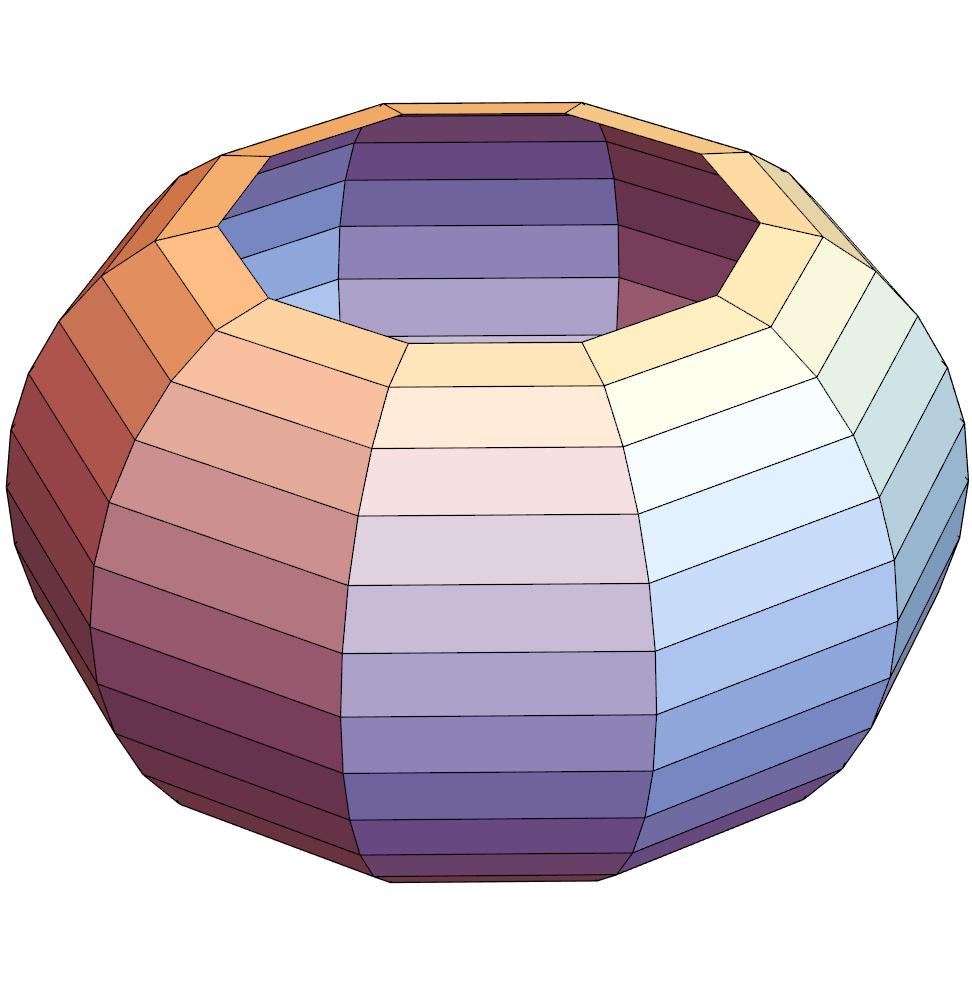}
\end{minipage}
\caption{Left: Smooth CGC $K=1$ surface for $p=0.9$. Second from left: Smooth CGC $K=1$ surface for $p=1.2$. Second from right: Discrete CGC $K=1$ surface for $p=0.9$ and $u_n=\frac{\pi}{12}n$. Right: Discrete CGC $K=1$ surface for $p=1.2$ and $u_n=\arcsin(\frac{1}{1.2}) \frac{n}{6}$.} 
\label{fig:CGC1}
\end{figure}

The case for the Gaussian curvature $K=-1$ can be described explicitly as well. For the smooth case, CGC $K=-1$ surfaces of revolution can be parametrized using any of the following three equations:
\begin{equation}
(f(u),h(u))=\Bigr(\frac{1}{\cosh u}, u-\tanh u\Bigr), \label{eq:pseudo}
\end{equation}
\begin{equation}
(f(u),h(u))=\Bigr(p\cosh u, \int_{0}^{u} \sqrt{1-p^2\sinh^2 s}\;ds\Bigr), \label{eq:cosh}
\end{equation}
\begin{equation}
(f(u),h(u))=\Bigr(q\sinh u, \int_{0}^{u} \sqrt{1-q^2\cosh^2{s}}\;ds\Bigr) \label{eq:sinh}
\end{equation}
for some $p>0$ and some $0<q<1$ (see \cite{Umehara}). Thus we can prove the following result like as for Theorem $10$.

\begin{thm}
For a discrete CGC $K=-1$ surface of revolution as in \eqref{surf}, we assume that 
$$f(n)\geq 0$$
for all $0\leq n\leq k_2$ for some integer $k_2\geq0$. We consider three cases corresponding to \eqref{eq:pseudo}, \eqref{eq:cosh} and \eqref{eq:sinh}$:$
\begin{enumerate}
\item[$(1)$] If we have 
$$f(0)=1,\;h(0)=0,\;a(0)=0,\;b(0)=1,$$
$$a(n)=\tanh u_n,\;b(n)=\frac{1}{\cosh u_n}$$
for some sequence $u_0<u_1<\cdots<u_{k_2}$, then the profile curve of the surface is given by
\begin{multline}
(f(n),h(n))=\Bigr(\frac{1}{\cosh u_{n}}, \sum_{i=1}^{n} \frac{(\cosh u_{i}-\cosh u_{i-1})^{2}}{\sinh (u_{i}-u_{i-1})\cosh u_{i}\cosh u_{i-1}}\Bigr)\\
(0<n\leq k_2).
\end{multline} 

\item[$(2)$] If we have 
$$f(0)=p,\;h(0)=0,\;a(0)=1,\;b(0)=0,$$
\begin{equation}
a(n)=\sqrt{1-p^2\sinh^2 u_{n}},\;b(n)=-p\sinh u_{n} \label{eq:gaCGC-1c}
\end{equation}
for some $p>0$ and some sequence $u_0<u_1<\cdots<u_{k_2}$, then the profile curve of the surface is given by
\begin{multline}
(f(n),h(n))=\\
\Bigr(p\cosh u_{n},\sum_{i=1}^{n} \frac{(-p\sinh u_{i}+p\sinh u_{i-1})(p\cosh u_{i}-p\cosh u_{i-1})}{\sqrt{1-p^2\sinh^2 u_{i}}-\sqrt{1-p^2\sinh^2 u_{i-1}}}\Bigr)\\
 (0<n\leq k_2).
\end{multline}

\item[$(3)$] If we have 
$$f(0)=q\sinh u_0,\;h(0)=0,\;a(0)=0,\;b(0)=1,$$
\begin{equation}
a(n)=\sqrt{1-q^2\cosh^2 u_{n}},\;b(n)=q\cosh u_{n} \label{eq:gaCGC-1h}
\end{equation}
for some $0<q<1$ and some sequence $u_0>u_1>\cdots>u_{k_2}$, then the profile curve of the surface is given by
\begin{multline}
(f(n),h(n))=\\
\Bigr(q\sinh u_{n},\sum_{i=1}^{n} \frac{(q\cosh u_{i}-q\cosh u_{i-1})(q\sinh u_{i}-q\sinh u_{i-1})}{\sqrt{1-q^2\cosh^2 u_{i}}-\sqrt{1-q^2\cosh^2 u_{i-1}}}\Bigr)\\
 (0<n\leq k_2).
\end{multline}
\end{enumerate}
\end{thm}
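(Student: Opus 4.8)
The plan is to follow the proof of the $K=1$ case (Theorem 10) essentially verbatim, handling all three cases by a single mechanism. The engine is the discrete Gaussian curvature formula \eqref{eq:curv}: imposing $K=-1$ gives
$$-1=\frac{a(n+1)^2-a(n)^2}{f(n+1)^2-f(n)^2},$$
which I rearrange into the first-order recursion
$$f(n+1)^2=f(n)^2+\bigl(a(n)^2-a(n+1)^2\bigr).$$
Since $a(n)$ is prescribed in each case, the right-hand side is known, so I can solve for $f(n)^2$ by telescoping and then select the nonnegative root permitted by the hypothesis $f(n)\ge 0$.

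First I would check that the stated initial data are self-consistent: in each case the prescribed values of $a(0),b(0)$ force a specific $u_0$ (namely $u_0=0$ in cases $(1)$ and $(2)$, and $\cosh u_0=1/q$ in case $(3)$), and one verifies that the given $f(0)$ agrees with the candidate formula at $n=0$. Then I run the induction. In case $(1)$, the identity $\tanh^2 u=1-1/\cosh^2 u$ gives $a(n)^2-a(n+1)^2=1/\cosh^2 u_{n+1}-1/\cosh^2 u_n$, so the recursion telescopes to $f(n)^2=1/\cosh^2 u_n$. In cases $(2)$ and $(3)$, the identity $\cosh^2 u-\sinh^2 u=1$ turns $a(n)^2-a(n+1)^2$ into $p^2(\cosh^2 u_{n+1}-\cosh^2 u_n)$ and $q^2(\sinh^2 u_{n+1}-\sinh^2 u_n)$ respectively, which telescope to $f(n)^2=p^2\cosh^2 u_n$ and $f(n)^2=q^2\sinh^2 u_n$. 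Taking the nonnegative root—using $f(n)\ge 0$, and in case $(3)$ the tacit assumption $u_n\ge 0$ to fix the sign of $\sinh u_n$—yields the claimed $f(n)$.

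With $f(n)$ determined, the component $h(n)$ comes directly from \eqref{eq:h} after substituting the known $a(n),b(n),f(n)$. In cases $(2)$ and $(3)$ no further work is needed, since the displayed formulas are literally \eqref{eq:h} with these substitutions inserted. Case $(1)$ requires one simplification: using $\tanh u_i-\tanh u_{i-1}=\sinh(u_i-u_{i-1})/(\cosh u_i\cosh u_{i-1})$ together with the coincidence $b(i)-b(i-1)=f(i)-f(i-1)$ collapses each summand of \eqref{eq:h} to $(\cosh u_i-\cosh u_{i-1})^2/\bigl(\sinh(u_i-u_{i-1})\cosh u_i\cosh u_{i-1}\bigr)$, as stated.

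I expect the only real obstacle to be bookkeeping rather than any conceptual difficulty: one must keep three distinct hyperbolic identities straight and confirm that the monotonicity and positivity hypotheses are exactly what is required to pin down the signs (the nonnegative square root for $f(n)$, and $u_n\ge 0$ in case $(3)$) and to guarantee $a(i)\ne a(i-1)$ so that the denominators appearing in \eqref{eq:h} are nonzero.
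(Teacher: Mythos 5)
Your proposal is correct and takes essentially the same approach as the paper: the paper proves Theorem 11 ``like as for Theorem 10,'' namely by imposing $K=-1$ in the difference equation \eqref{eq:curv}, determining $f(n)$ inductively using the nonnegativity hypothesis, and then reading off $h(n)$ from \eqref{eq:h}. Your telescoping of the squared recursion is just that induction in closed form, and your additional checks (consistency of the initial data, the hyperbolic-identity simplification in case $(1)$, the sign of $\sinh u_n$ in case $(3)$, and nonvanishing denominators) merely fill in details the paper leaves implicit.
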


\begin{rem}
In Theorem $11$ we can extend to $k_1\leq n\leq k_2$ for some integer $k_1\leq 0$, like we did in Theorem $10$. The arguments are completely analogous.
\end{rem}

Figure \ref{fig:CGC-1} shows smooth and discrete CGC $K=-1$ surfaces of revolution. Figure \ref{fig:com1} shows the comparison of the profile curves of the smooth and discrete CGC surfaces of revolution.

\begin{figure}[htbp]
 \hspace{-0.7cm}
 \centering
\begin{minipage}[t]{0.32\textwidth}
 \includegraphics[width=3.0cm]{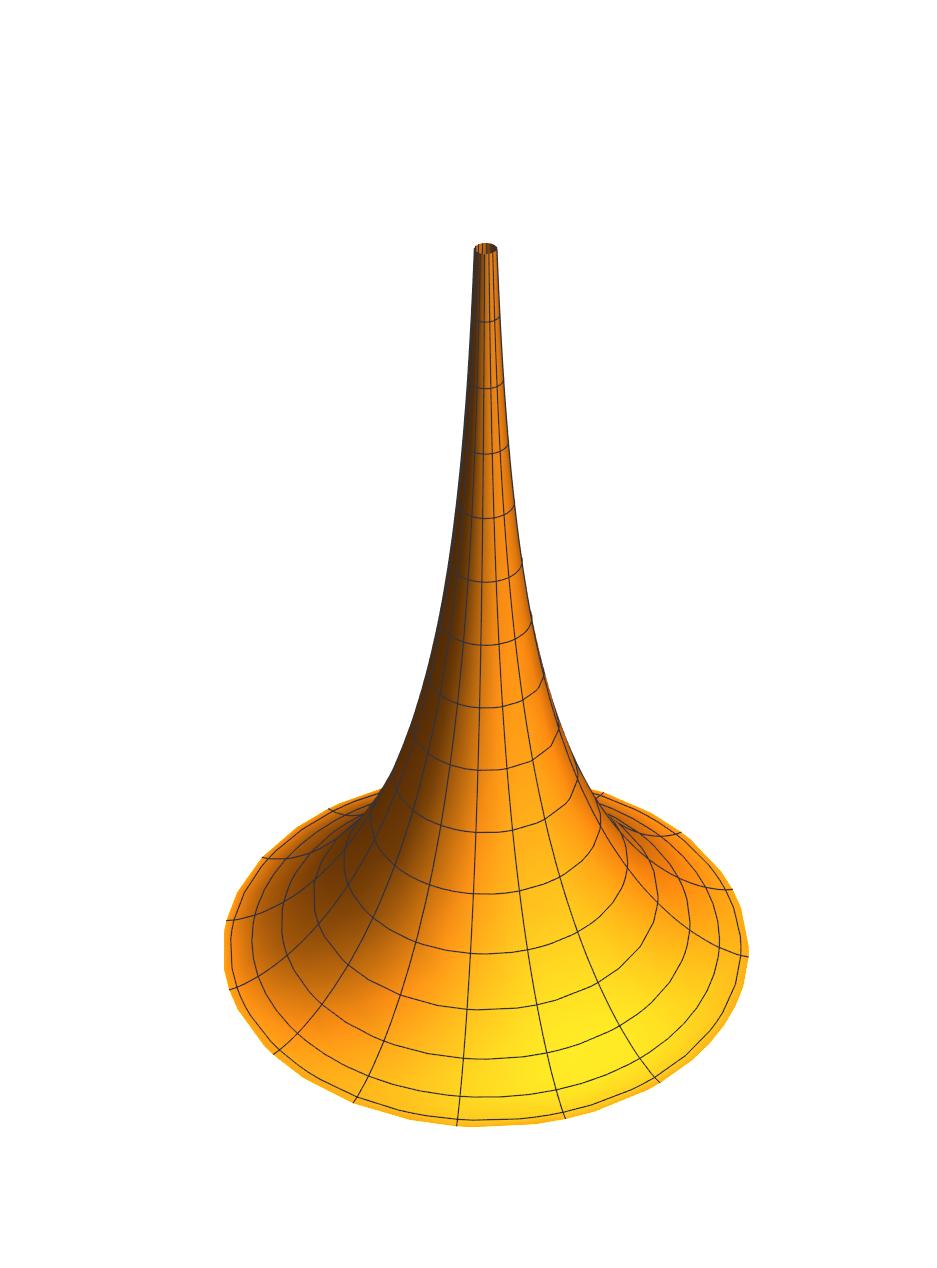}
\end{minipage}
\begin{minipage}[t]{0.32\textwidth}
 \includegraphics[width=3.0cm]{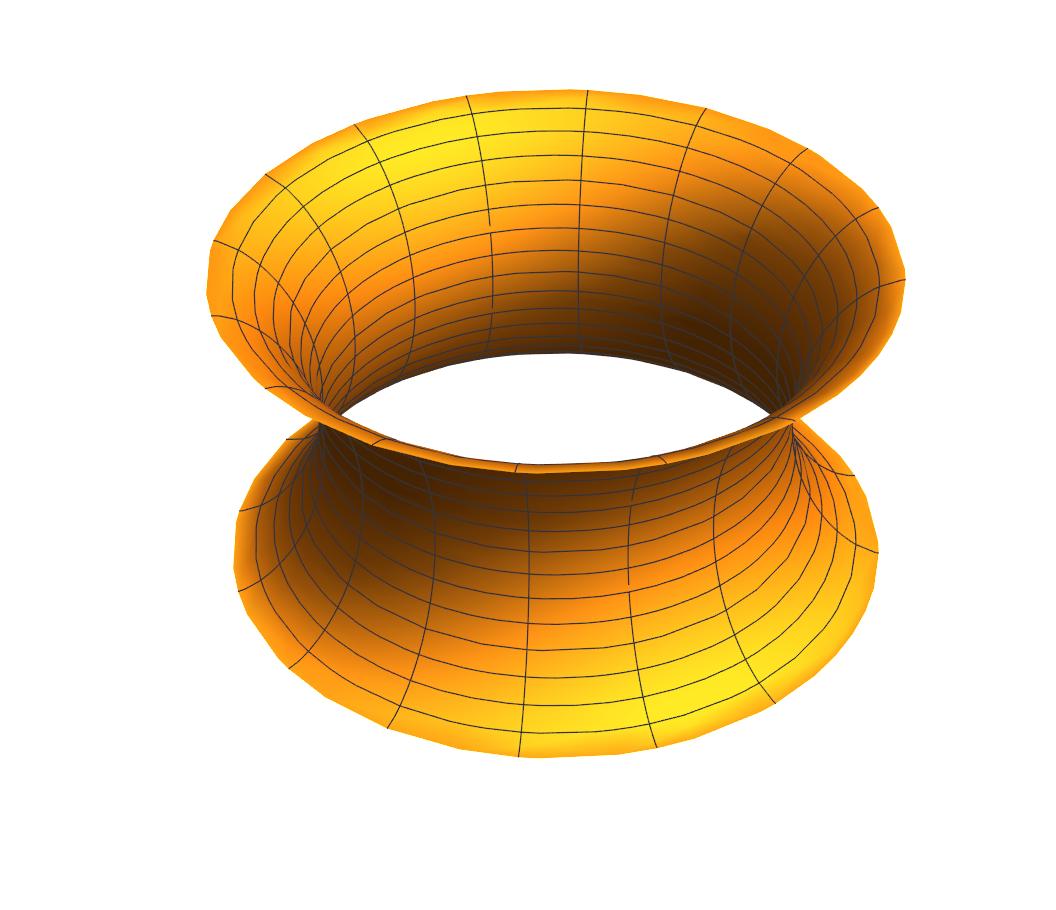}
\end{minipage}
\begin{minipage}[t]{0.32\textwidth}
 \includegraphics[width=3.3cm]{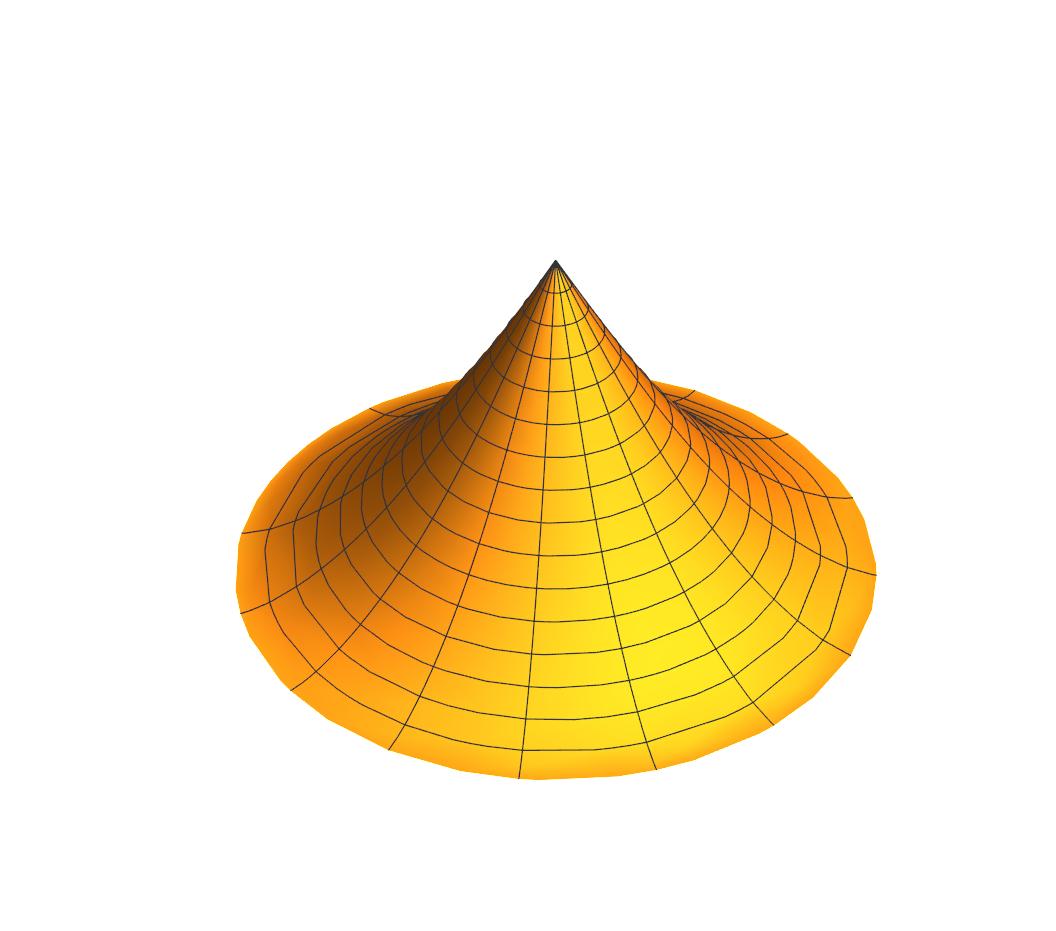}
\end{minipage}
\begin{minipage}[t]{0.322\textwidth} \vspace{-0.7cm} 
 \includegraphics[width=2.7cm]{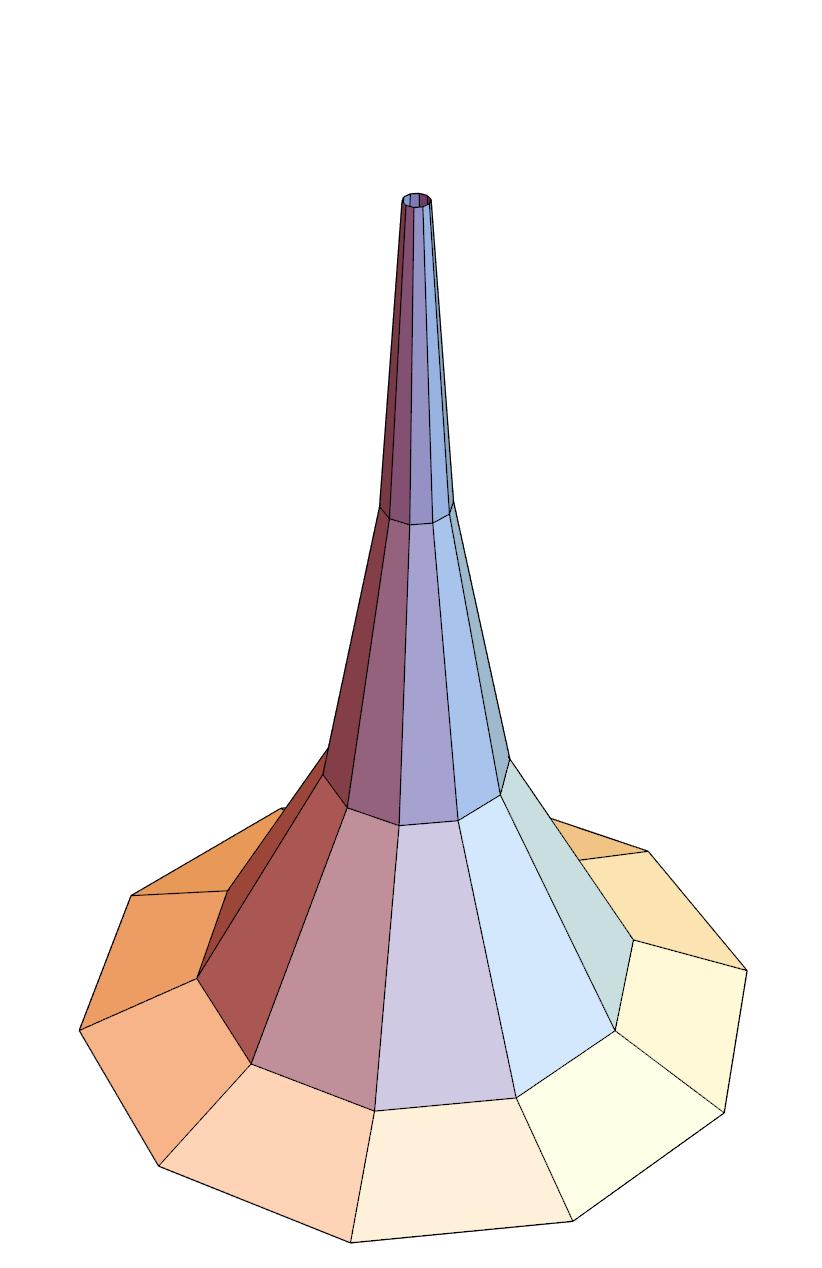}
\end{minipage}
\begin{minipage}[t]{0.322\textwidth} \vspace{0.5cm}
 \includegraphics[width=2.7cm]{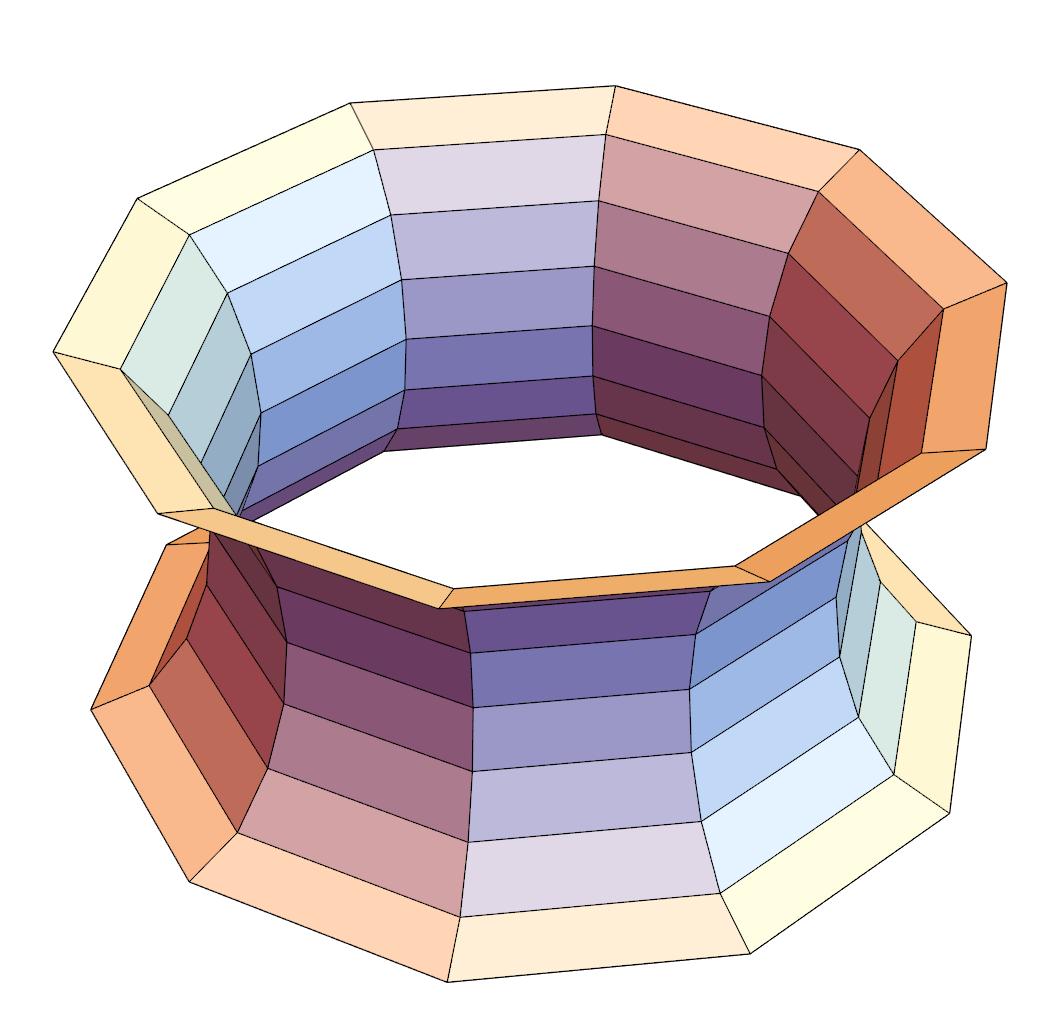}
\end{minipage}
\begin{minipage}[t]{0.322\textwidth} \vspace{0.5cm}
 \includegraphics[width=2.7cm]{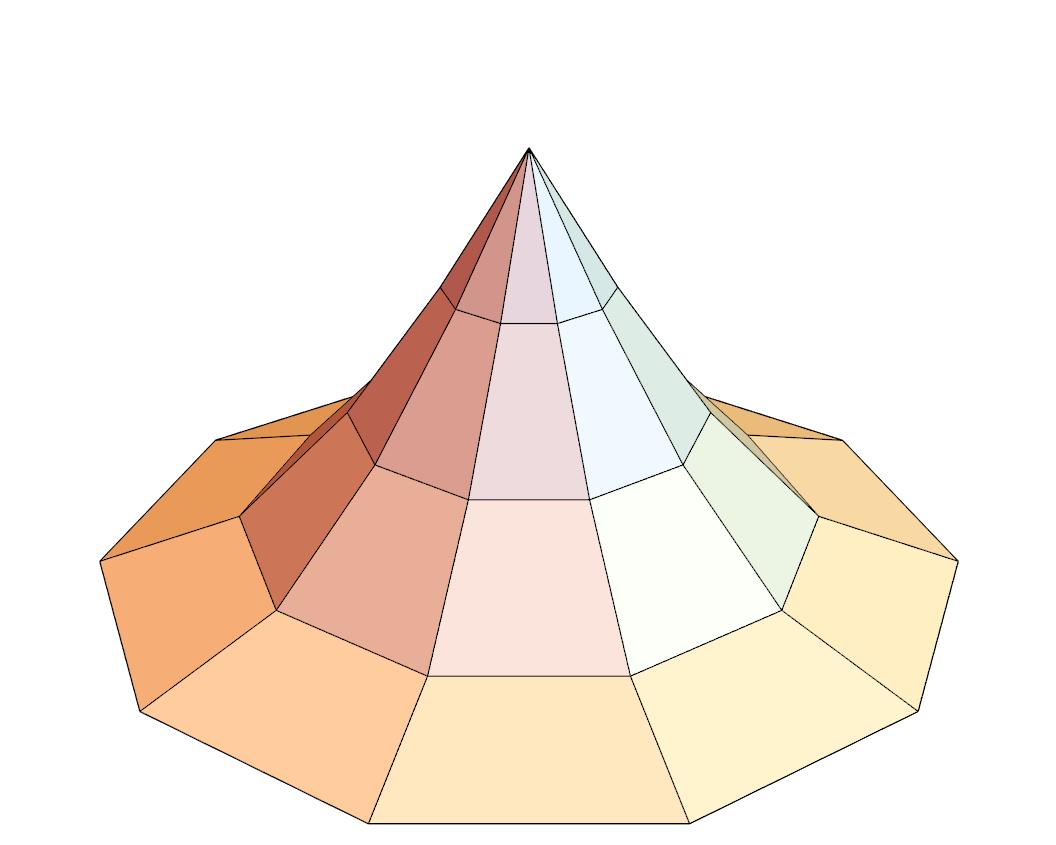}
\end{minipage}
\caption{Upper left: Smooth pseudosphere parameterized by \eqref{eq:pseudo}. Upper middle: Smooth CGC $K=-1$ surface parameterized by \eqref{eq:cosh} for $p=1$. Upper right: Smooth CGC $K=-1$ surface parameterized by \eqref{eq:sinh} for $q=\frac{1}{2}$. Lower left: Discrete pseudosphere for $u_n=n$. Lower middle: Discrete CGC $K=-1$ surface for $p=1$ and $u_n=\frac{\log({1+\sqrt{2}})}{4}n$. Lower right:  Discrete CGC $K=-1$ surface for $q=\frac{1}{2}$ and $u_n=(1-\frac{n}{4})\mathrm{arccosh}2$.} 
\label{fig:CGC-1}
\end{figure}

\begin{figure}[htbp]
 \centering
 \hspace{1.5cm}
\begin{minipage}[t]{0.45\textwidth}
 \includegraphics[width=2.0cm]{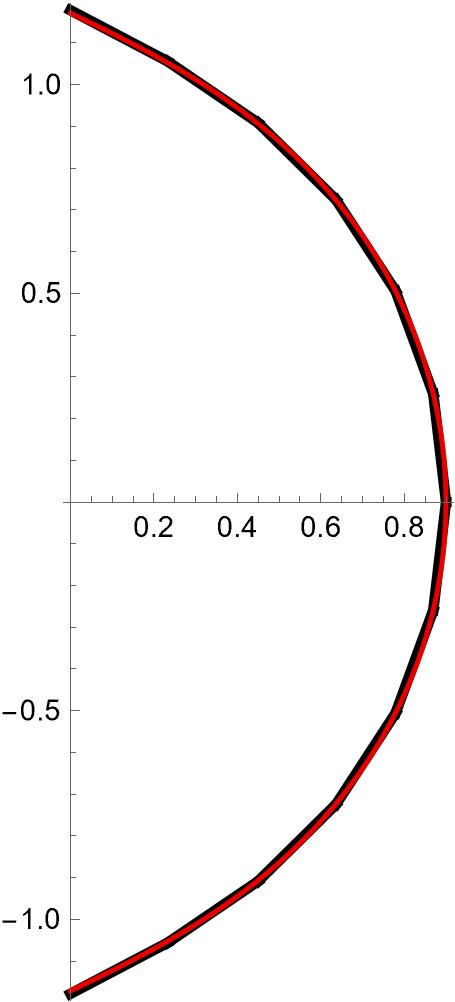}
\end{minipage}
\begin{minipage}[t]{0.4\textwidth}
 \includegraphics[width=1.3cm]{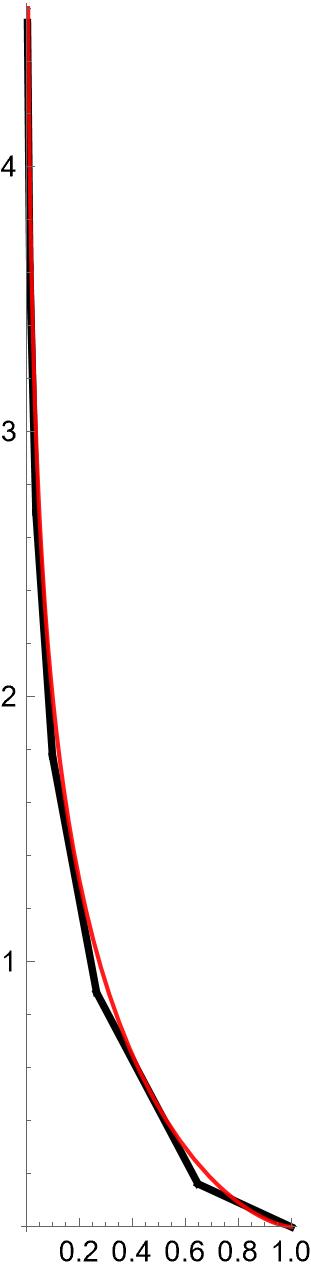}
\end{minipage}
\caption{Left: The profile curves of the spindle type CGC $K=1$ discrete surface and CGC $K=1$ smooth surface in Figure \ref{fig:CGC1}. Right: The profile curves of the discrete pseudosphere and smooth pseudosphere in Figure \ref{fig:CGC-1}.} 
\label{fig:com1}
\end{figure}

\begin{rem}
We can also find that $a(u)$ and $f(u)$ in the case of smooth constant mean curvature surfaces of revolution satisfy the difference equation 
$$H=\frac{f(u_n)a(u_{n})-f(u_{n+1})a(u_{n+1})}{f(u_{n+1})^2-f(u_{n})^2},$$
similarly to an analogous equation for discrete constant mean curvature surfaces of revolution. Therefore, in the same way as for CGC surfaces, we have the functions
$$(f(n),h(n))=\Bigr(\cosh u_{n},\sum_{i=1}^{n} \sinh (u_{i}-u_{i-1})\Bigr)$$
which can parameterize the discrete catenoid. This surface can be obtained by substituting the discrete holomorphic function $g(n,m)=\exp(-u_{n}+\sqrt{-1}\frac{2\pi m}{l})$ into the Weierstrass representation for discrete minimal surfaces.
Figure \ref{fig:cate} shows two discrete catenoid with different $u_n$.
\end{rem}

\begin{figure}[htbp]
 \centering
 \hspace{1.0cm}
\begin{minipage}[t]{0.45\textwidth}
 \includegraphics[width=3cm]{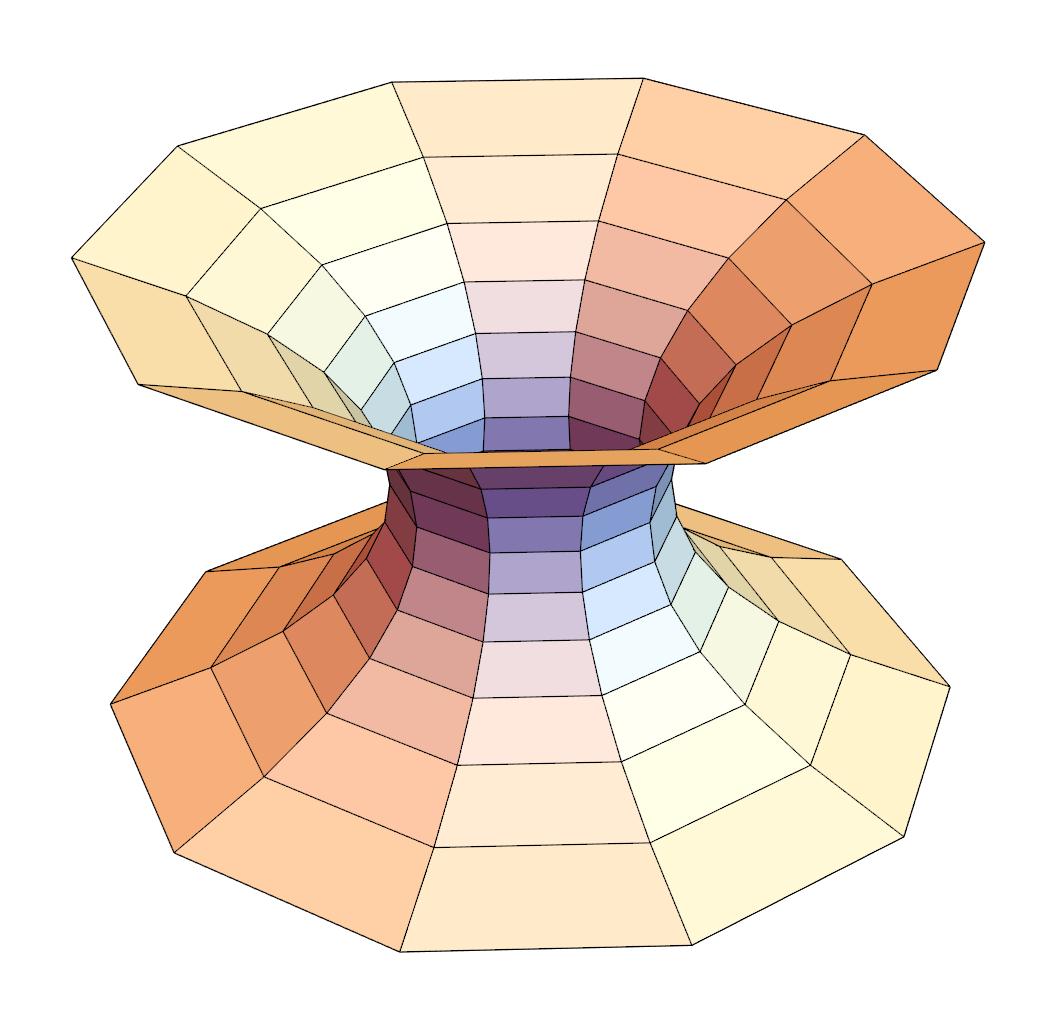}
\end{minipage}
\begin{minipage}[t]{0.4\textwidth}
 \includegraphics[width=3cm]{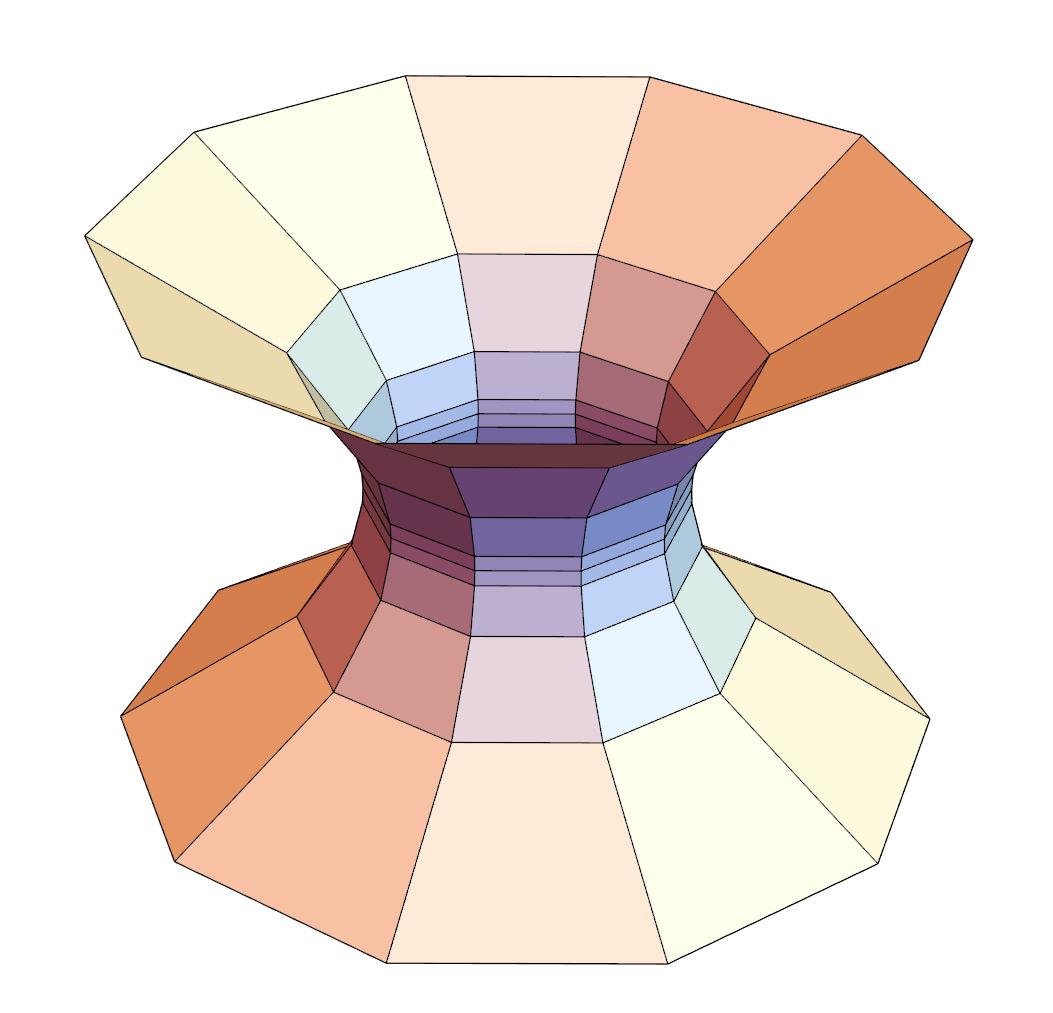}
\end{minipage}
\caption{Left: Discrete catenoid for $u_{n}=0.3n$. Right: Discrete catenoid for $u_{n}=0.1n^2$.} 
\label{fig:cate}
\end{figure}

\begin{rem}
In fact, we also now have explicit parameterizations of discrete constant mean curvature surfaces (Delaunay surfaces) by looking at two particular parallel surfaces of CGC $K$ surfaces with $K=c$ positive, as follows:
\begin{multline*}
(f(n),h(n))=\Bigr(\frac{\epsilon}{\sqrt{c}}\sqrt{1-p^2c\sin^2(\sqrt{c}u_{n})}+p\cos(\sqrt{c}u_{n}),\frac{\epsilon}{\sqrt{c}}p\sqrt{c}\sin(\sqrt{c}u_{n})+\\
\sum_{i=1}^{n} \frac{(p\sqrt{c}\sin(\sqrt{c}u_{i})-p\sqrt{c}\sin(\sqrt{c}u_{i-1}))(p\cos(\sqrt{c}u_{i})-p\cos(\sqrt{c}u_{i-1}))}{\sqrt{1-p^2c\sin^2(\sqrt{c}u_{i})}-\sqrt{1-p^2c\sin^2(\sqrt{c}u_{i-1})}}\Bigr)
\end{multline*}
for $\epsilon=\pm1$. Figure \ref{fig:dela} shows discrete CMC surfaces. In \cite{Inoguchi} there are functions which parameterize smooth Delaunay surfaces. Alternately, using those functions, we can also determine parametrizations of discrete Delaunay surfaces. 
\end{rem}
\vspace{-0.3cm}

\begin{figure}[htbp]
 \hspace{1.3cm}
 \centering
\begin{minipage}[t]{0.45\textwidth}
 \includegraphics[trim= 2.5cm 2.7cm 2.7cm 2.7cm, clip, width=2.3cm]{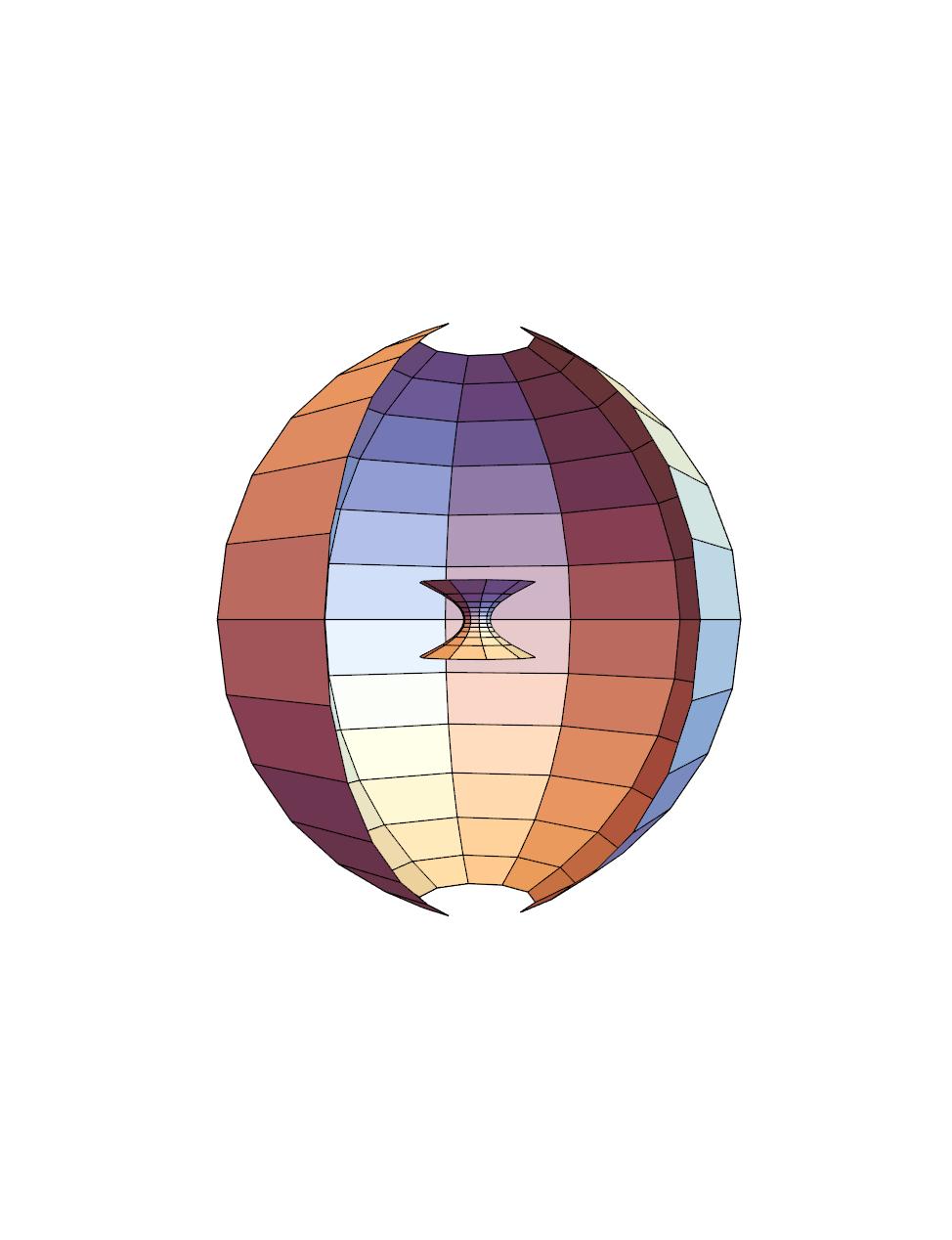}
\end{minipage}
\vspace{-0.5cm}
\begin{minipage}[t]{0.4\textwidth}
 \includegraphics[trim= 2.0cm 2.7cm 2.0cm 2.7cm, clip, width=2.5cm]{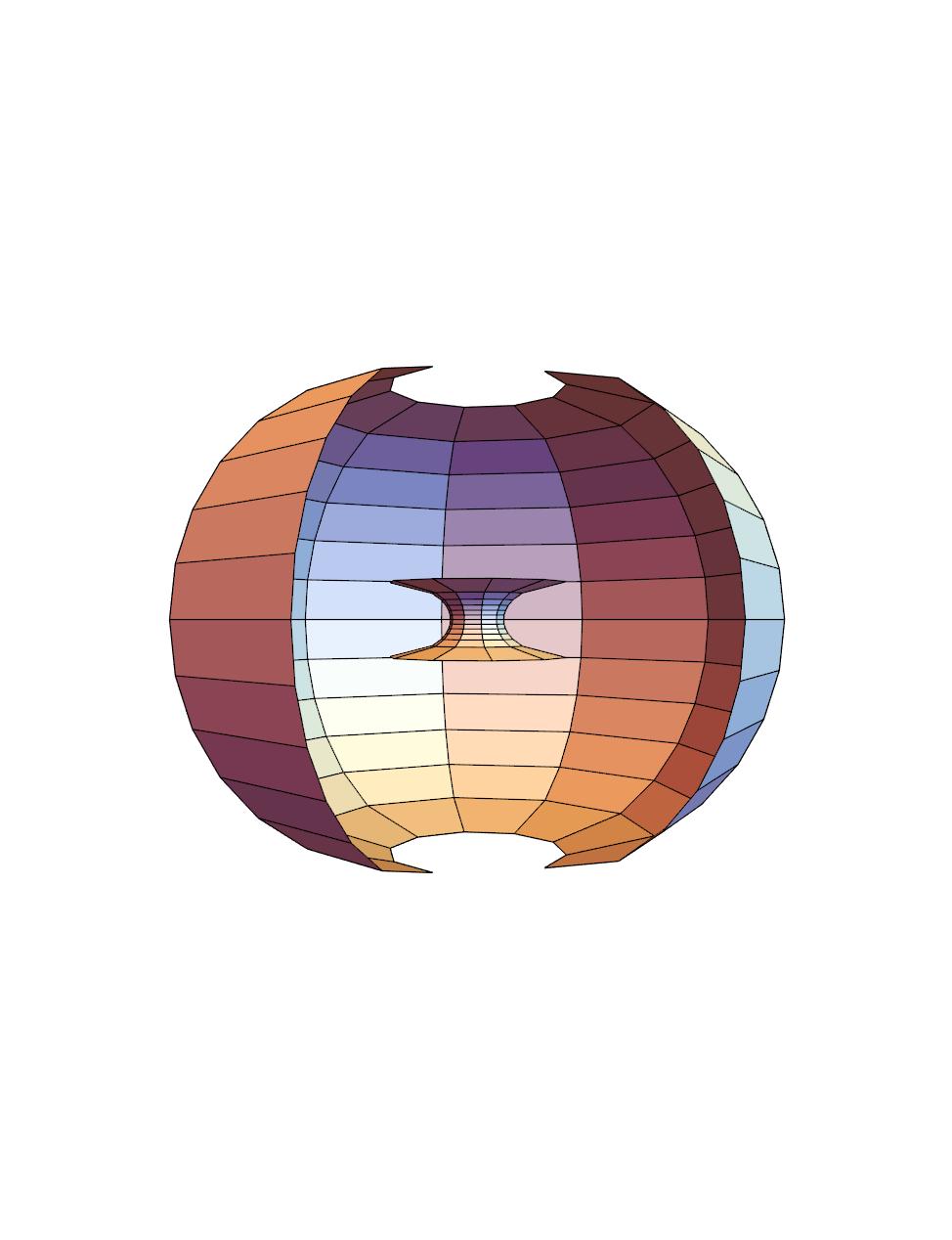}
\end{minipage}
\caption{Left: Discrete CMC surfaces which are parallel to the surface on the second from right of Figure \ref{fig:CGC1}. Right: Discrete CMC surfaces which are parallel to the surface on the right of Figure \ref{fig:CGC1}. Note that two pieces of surfaces in each case provide the fundamental building blocks for a complete discrete Delaunay surface, becoming embedded on the left and nonembedded on the right.} 
\label{fig:dela}
\end{figure}

\section{The Ricci flow on discrete surfaces of revolution}
We consider the discrete surface $x\colon \mathbb{Z}^2\times[0,\infty)\rightarrow \mathbb{R}^3$ with a flow parameter $t$ included:
\begin{equation}
x(m,n,t)=\Bigr(f(n,t)\cos \frac{2\pi m}{l}, f(n,t)\sin \frac{2\pi m}{l}, h(n,t)\Bigr), \label{surf2}
\end{equation}
and the unit normal vector $\nu$ can be written as
$$\nu(n,m,t)=\Bigr(a(n,t)\cos \frac{2\pi m}{l}, a(n,t)\sin \frac{2\pi m}{l}, b(n,t)\Bigr).$$
Using the metric terms $g_{11}, g_{22}$ and the Gaussian curvature $K$ for discrete surfaces of revolution  \eqref{surf2} as defined in Section $4$, we can consider the equations 
\begin{equation}
\left\{ \,
\begin{aligned}
&\frac{\partial}{\partial t}g_{11}(n,t)=-2K(n,t)g_{11}(n,t),\\
&\frac{\partial}{\partial t}g_{22}(n,t)=-2K(n,t)g_{22}(n,t)\;\;(n=0, \cdots ,k-1), \label{eq:flow3.5}
\end{aligned}
\right.
\end{equation}
where $k$ represents the number of layers above the $xy$-plane. Using the mixed area $A(\cdot,\cdot)$ defined in Section $4$, we can also consider  the normalized Ricci flow as
\begin{equation}
\left\{ \,
\begin{aligned}
&\frac{\partial}{\partial t}g_{11}(n,t)=(r(t)-2K(n,t))g_{11}(n,t),\\
&\frac{\partial}{\partial t}g_{22}(n,t)=(r(t)-2K(n,t))g_{22}(n,t)\;\;(n=0, \cdots ,k-1), \label{eq:flow4}
\end{aligned}
\right.
\end{equation}
where $$r(t):=\frac{\sum_{0\le i\le k-1}2K(i,t)A(x)(i,t)}{\sum_{0\le i\le k-1}A(x)(i,t)}.$$
We will add some conditions so that the solution $f(n,t)$, $h(n,t)$ can be determined, and we will now consider some particular cases of the normalized Ricci flow. Existence and uniqueness of these flows will be explained in Section $7$.\\
\\
\noindent \textbf{Normalized Ricci flow toward positive CGC surfaces with cones.}
\begin{equation}
\left\{ \,
\begin{aligned}
&\frac{\partial}{\partial t}g_{11}(n,t)=(r(t)-2K(n,t))g_{11}(n,t),\\
&\frac{\partial}{\partial t}g_{22}(n,t)=(r(t)-2K(n,t))g_{22}(n,t)\;\;(n=0, \cdots ,k-1),\\
&a(0,t)=1\;\;(b(0,t)=0),\\
&h(0,t)=0,\\
&f(k,t)=0.\label{eq:flow5}
\end{aligned}
\right.
\end{equation}
We want to define normalized Ricci flow toward positive CGC surfaces with cones, and we have the freedom to define a unit normal vector at one vertex. For the functions \eqref{eq:gaCGC1} which can parameterize the unit normal vector of the positive CGC surfaces with cones, when we choose $u_0=0$, we have $a(0)=1$ and $b(0)=0$. So we set $a(0,t)=1$ and $b(0,t)=0$ at $n=0$.  
The condition $h(0,t)=0$ is given to the function $h$ representing height, considering the symmetry with respect to the $xy$-plane. The condition $f(k,t)=0$ causes the top layer to consist of triangles instead of  quadrilaterals. The surfaces  can be considered to have a cone singularity. This means that the unit normal vector $\nu(m,k,t)$ is not fixed to one value and changes with respect to $m$. The initialized surface $x(m,n,0)$ is expected to flow toward a surface that has positive constant Gaussian curvature, which should be a sphere or a spindle type surface. Figure \ref{fig:fixf} shows numerical results (using Mathematica).  For the initialized surface in Figure \ref{fig:fixf}, we uses the parameter in \cite{dumbbell} and its surface is half the shape of a dumbbell. The left side of Figure \ref{fig:gauss1} shows that the Gaussian curvatures are approaching roughly 1.05474  fairly quickly in Figure \ref{fig:fixf}. Then we consider the constant Gaussian curvature $K=1.0547444492811$ and substitute $c=1.0547444492811$ into \eqref{eq:CGC} and compare that with $f(\cdot,10^4)$ and $h(\cdot,10^4)$ obtained in Figure \ref{fig:fixf}. We choose $u_n$ and $p$ such that $p\cos(\sqrt{c}u_{n})=f(n,10^4)$. Substituting these  $u_n$ and $p$ into in \eqref{eq:CGC}, we have
\begin{equation*}
\begin{aligned}
&(h(0),h(1),h(2),h(3),h(4),h(5),h(6))\\
&\approx(0, 0.455256, 0.738473, 0.874059, 0.940356, 0.978055, 1.00206).
\end{aligned}
\end{equation*}
The $h(\cdot,10^4)$ obtained in Figure \ref{fig:fixf} are
\begin{equation*}
\begin{aligned}
&(h(0,10^4),h(1,10^4),h(2,10^4),h(3,10^4),h(4,10^4),h(5,10^4),h(6,10^4))\\
&\approx(0, 0.455256, 0.738473, 0.874059, 0.940356, 0.978055, 1.00206).
\end{aligned}
\end{equation*}
Numerically, we can see that $f(\cdot,t)$ and $h(\cdot,t)$ are approaching the functions for a CGC surface of revolution.

\begin{figure}[htbp]
 \hspace{-1.0cm}
 \centering
\begin{minipage}[t]{0.16\textwidth}
 \includegraphics[trim= 2.0cm 0cm 2.0cm 0cm, clip, width=2.5cm]{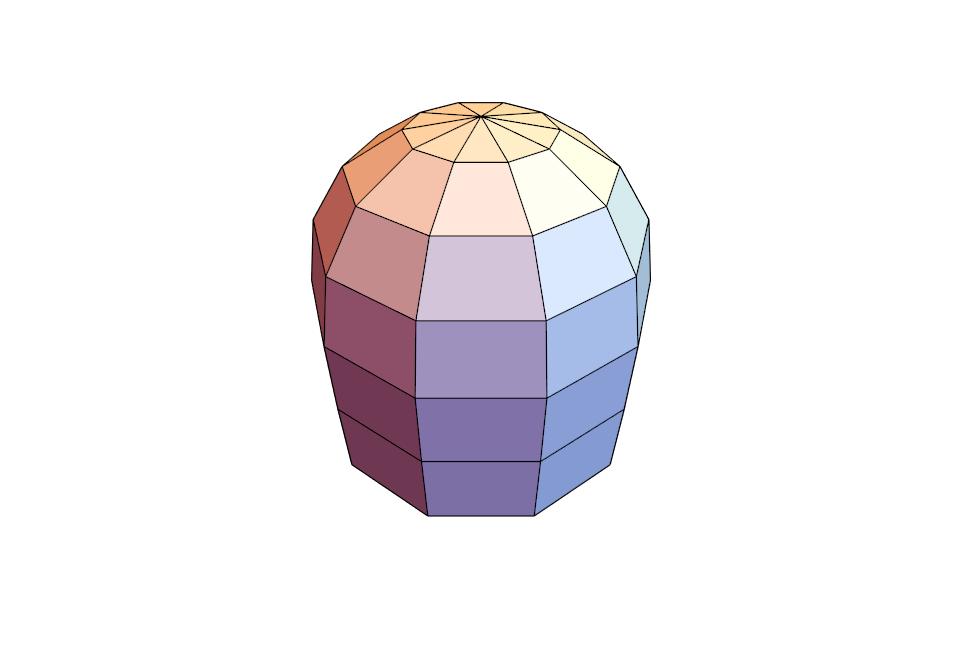} 
\end{minipage}
\begin{minipage}[t]{0.16\textwidth}
 \includegraphics[trim= 2.0cm 0cm 2.0cm 0cm, clip, width=2.5cm]{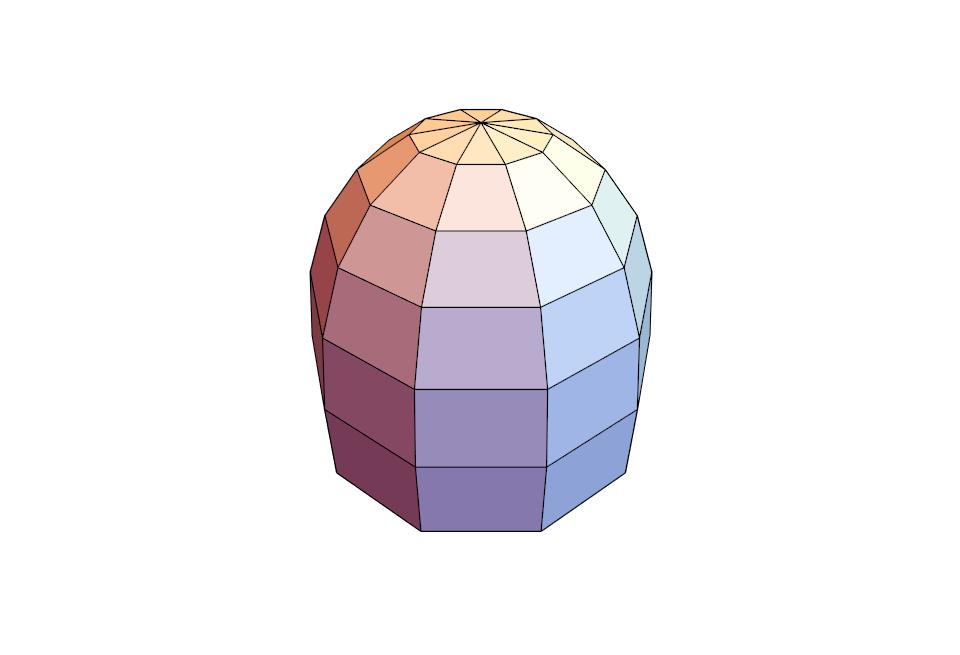} 
\end{minipage}
\begin{minipage}[t]{0.16\textwidth}
 \includegraphics[trim= 2.0cm 0cm 2.0cm 0cm, clip, width=2.5cm]{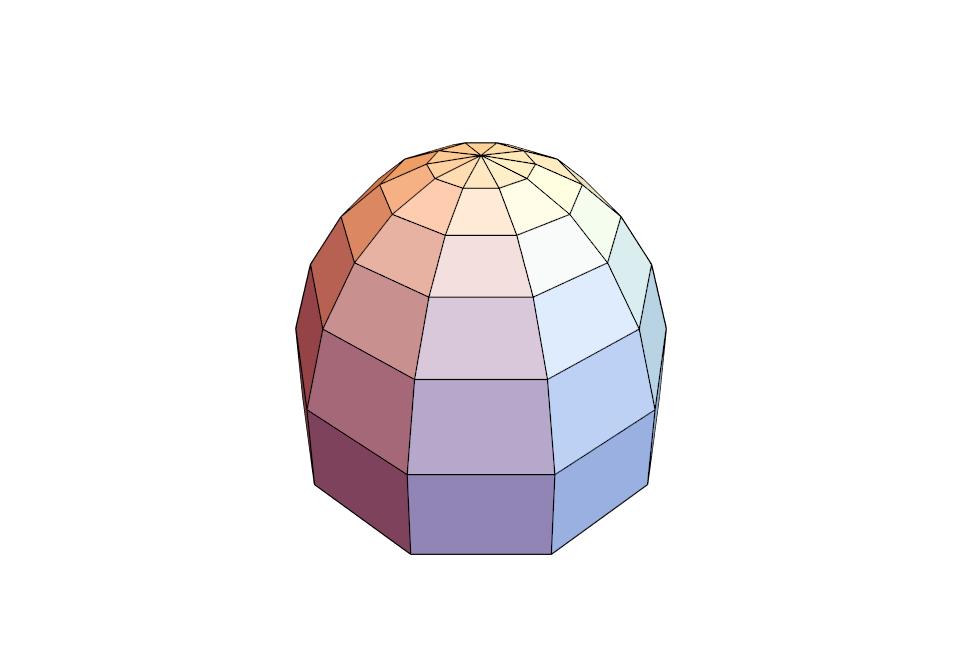}
\end{minipage}
\begin{minipage}[t]{0.16\textwidth}
 \includegraphics[trim= 2.0cm 0cm 2.0cm 0cm, clip, width=2.5cm]{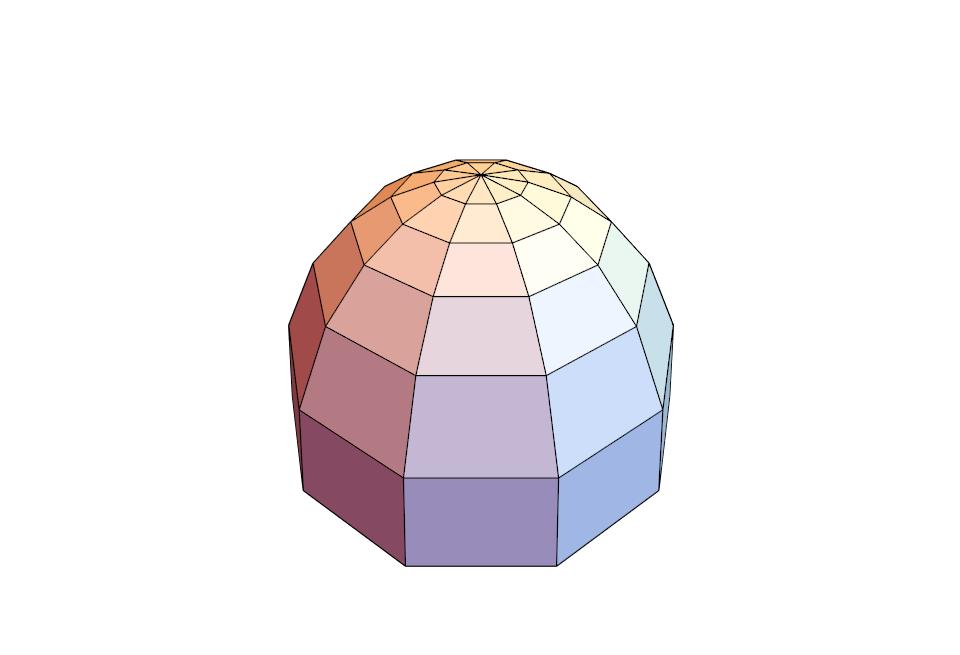}
\end{minipage}
\begin{minipage}[t]{0.16\textwidth}
 \includegraphics[trim= 2.0cm 0cm 2.0cm 0cm, clip, width=2.5cm]{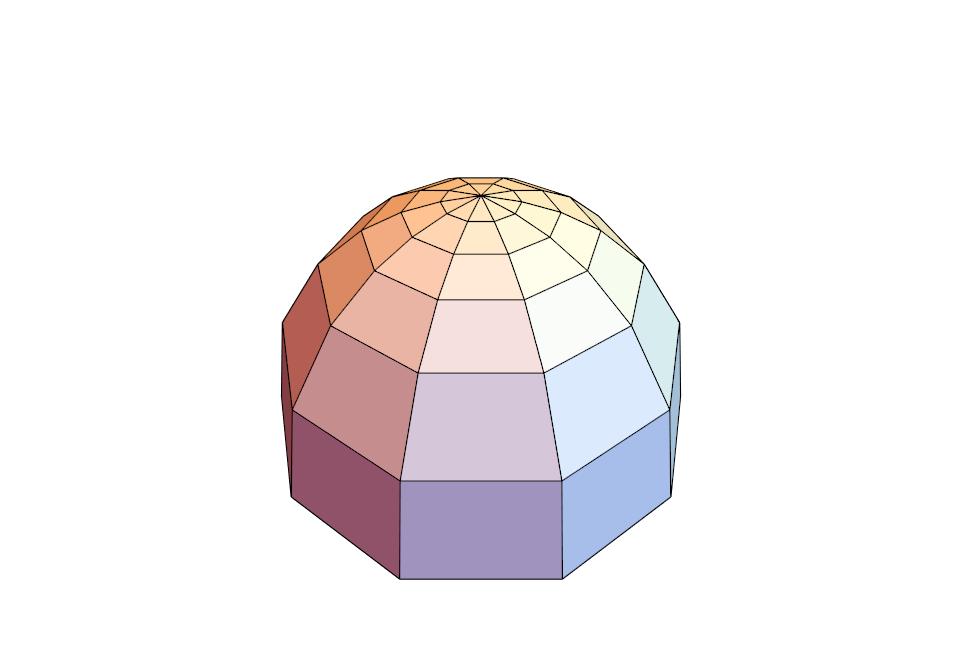}
\end{minipage}
\begin{minipage}[t]{0.16\textwidth}
 \includegraphics[trim= 2.0cm 0cm 2.0cm 0cm, clip, width=2.5cm]{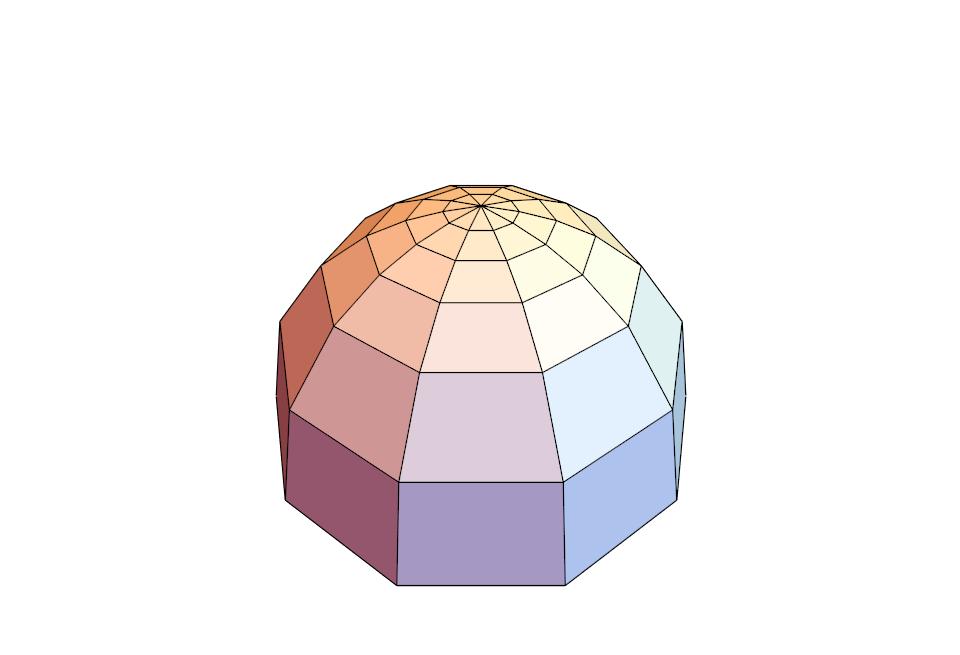}
\end{minipage}
\caption{A discrete surface moving under normalized Ricci flow \eqref{eq:flow5} toward a positive CGC surface with a cone, from left to right.}
\label{fig:fixf}
\end{figure}

\begin{figure}[htbp]
 \centering
\begin{minipage}[t]{0.48\textwidth}
 \includegraphics[width=4.5cm]{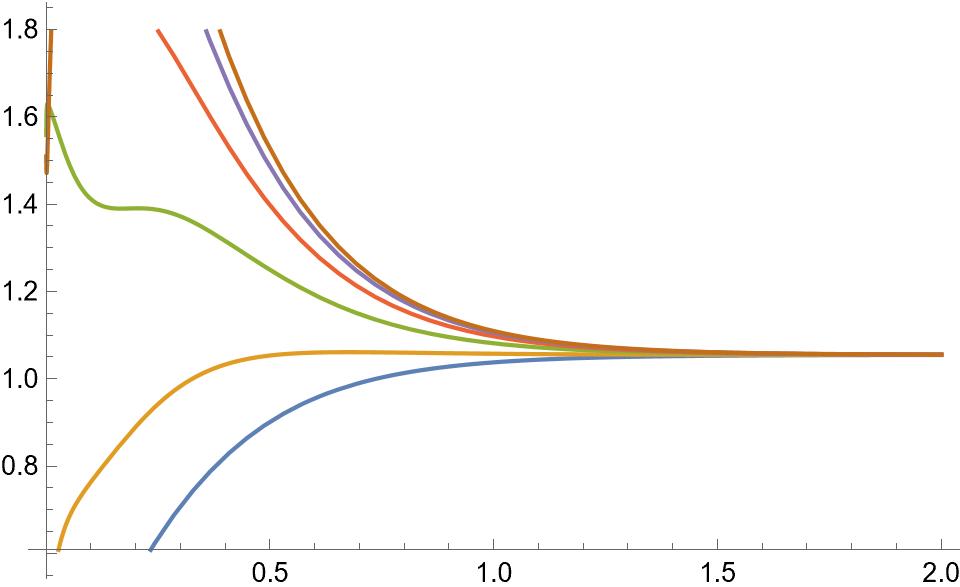} 
\end{minipage}
\begin{minipage}[t]{0.48\textwidth}
 \includegraphics[width=4.5cm]{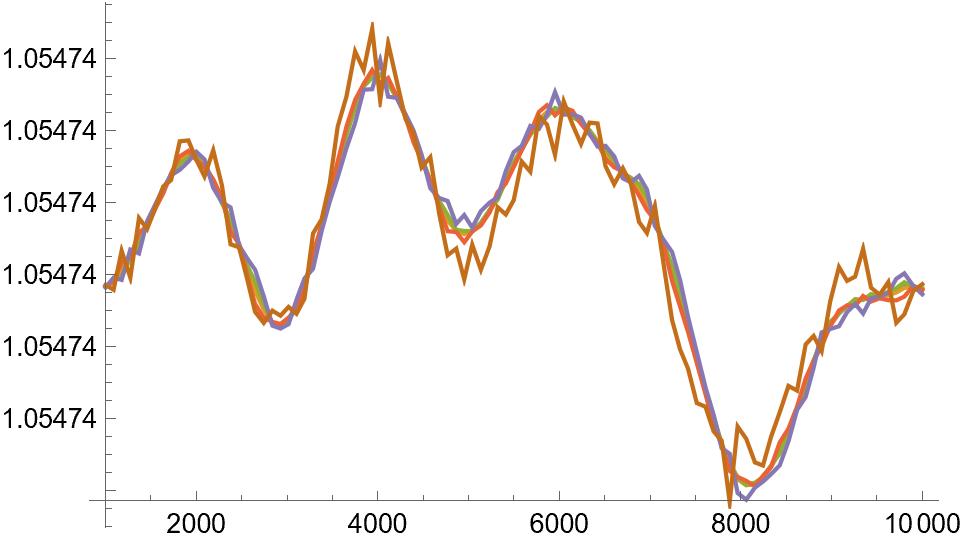} 
\end{minipage}
\caption{Left: The Gaussian curvature from time $0$ to $2$ on each layer. Right: The Gaussian curvature from time $10^3$ to $10^4$ on each layer (the layers are color coded.}
\label{fig:gauss1}
\end{figure}

\noindent \textbf{Normalized Ricci flow toward negative CGC surfaces with cones.}
\begin{equation}
\left\{ \,
\begin{aligned}
&\frac{\partial}{\partial t}g_{11}(n,t)=(r(t)-2K(n,t))g_{11}(n,t),\\
&\frac{\partial}{\partial t}g_{22}(n,t)=(r(t)-2K(n,t))g_{22}(n,t)\;\;(n=0, \cdots ,k-1),\\
&b(0,t)=1\;\;(a(0,t)=0),\\
&h(0,t)=0,\\
&f(k,t)=0.\label{eq:flow6}
\end{aligned}
\right.
\end{equation}
Here, we want to define normalized Ricci flow toward negative CGC surfaces with cones. For the functions \eqref{eq:gaCGC-1h}, there exists $u_0$ such that $a(0)=0$ and $b(0)=1$. Therefore we set $a(0,t)=0$ and $b(0,t)=1$ at $n=0$. We added the condition $h(0,t)=0$ and $f(k,t)=0$ for the same reason as in \eqref{eq:flow5}. This flow is expected to converge to a surface with negative constant Gaussian curvature, and Figure \ref{fig:negf} shows numerical results.

\begin{figure}[htbp]
 \hspace{-1.0cm}
 \centering
\begin{minipage}[t]{0.16\textwidth}
 \includegraphics[trim= 2.0cm 0cm 2.0cm 0cm, clip, width=2.2cm]{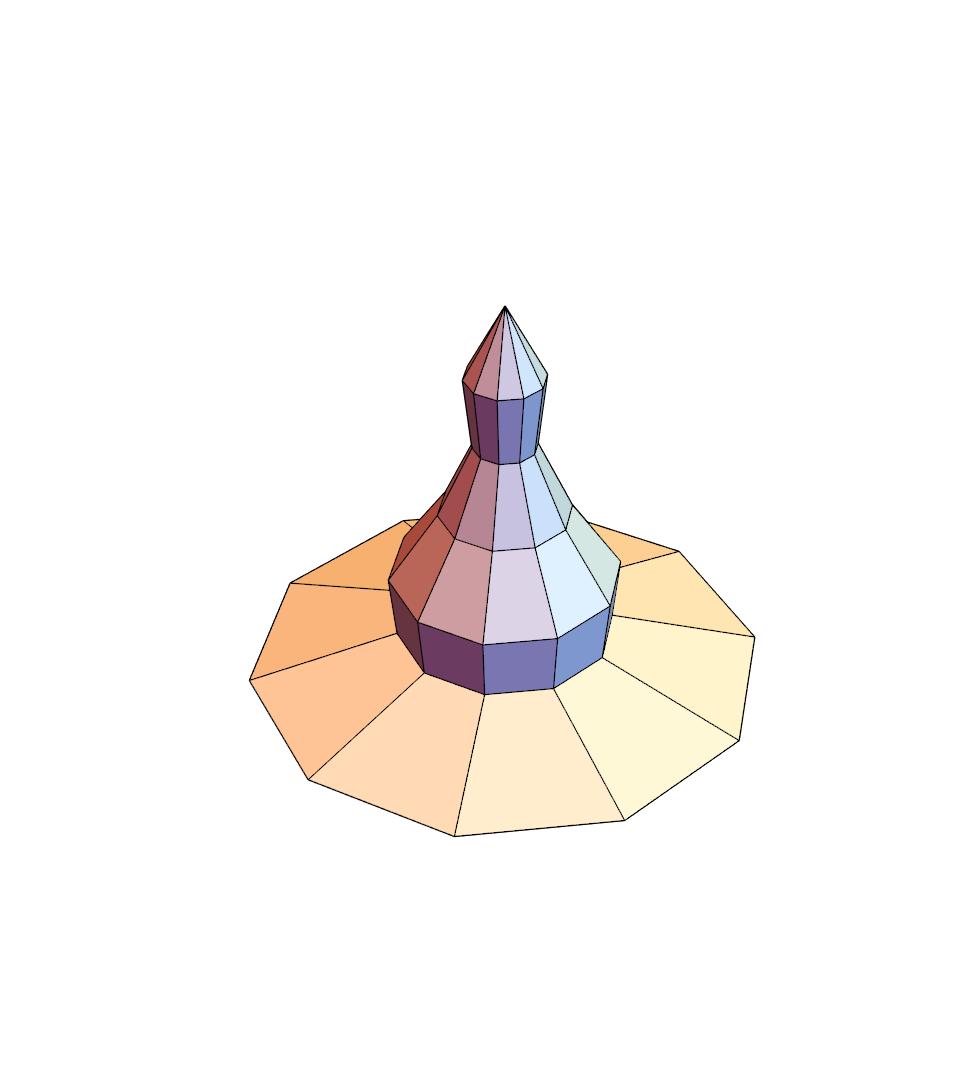} 
\end{minipage}
\vspace{-0.5cm}
\begin{minipage}[t]{0.16\textwidth}
 \includegraphics[trim= 2.0cm 0cm 2.0cm 0cm, clip, width=2.2cm]{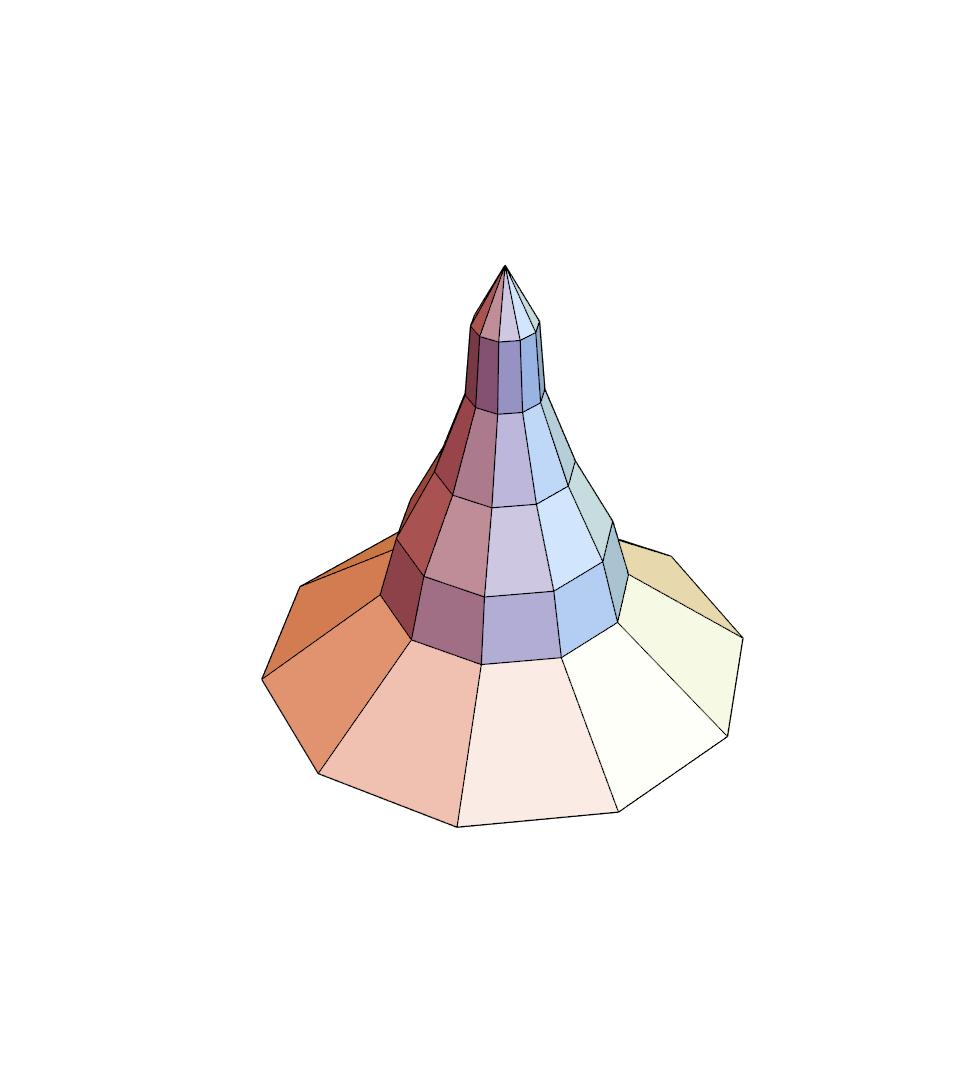} 
\end{minipage}
\begin{minipage}[t]{0.16\textwidth}
 \includegraphics[trim= 2.0cm 0cm 2.0cm 0cm, clip, width=2.2cm]{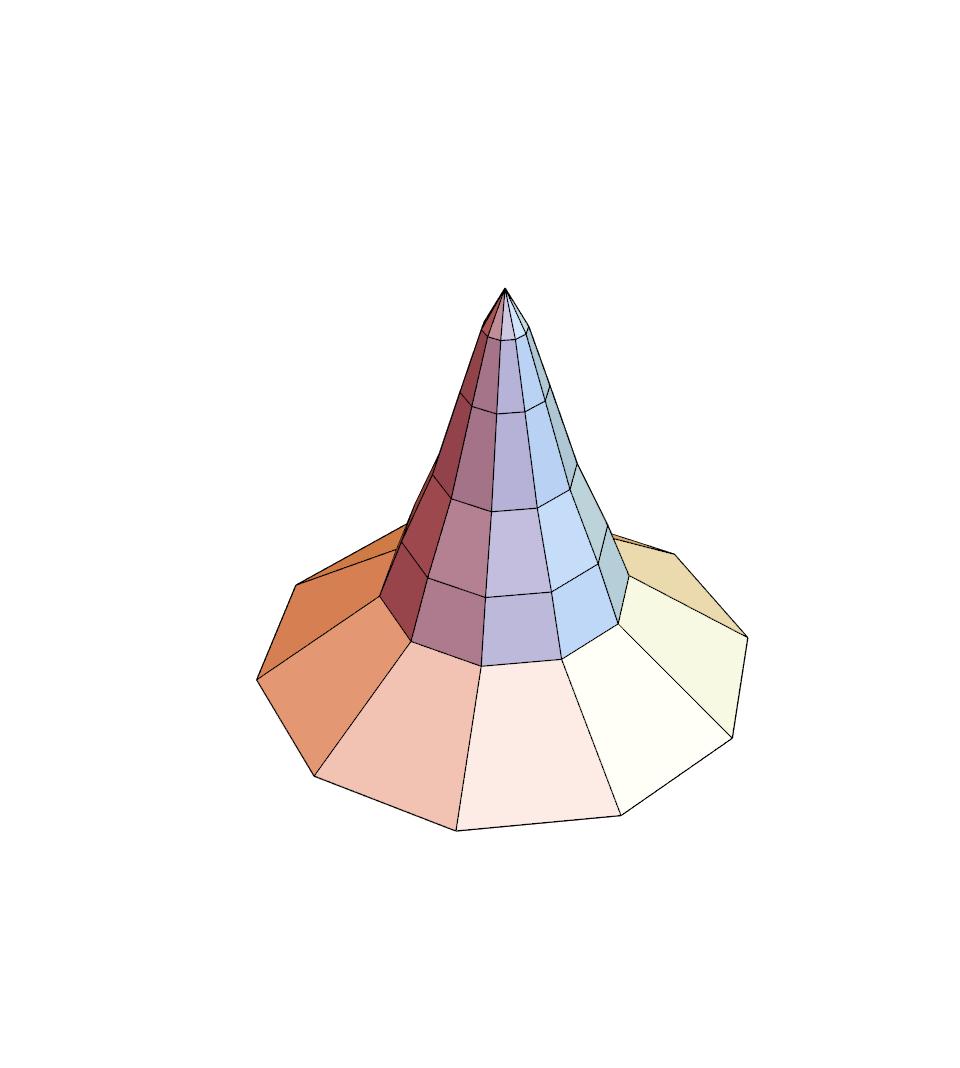}
\end{minipage}
\begin{minipage}[t]{0.16\textwidth}
 \includegraphics[trim= 2.0cm 0cm 2.0cm 0cm, clip, width=2.2cm]{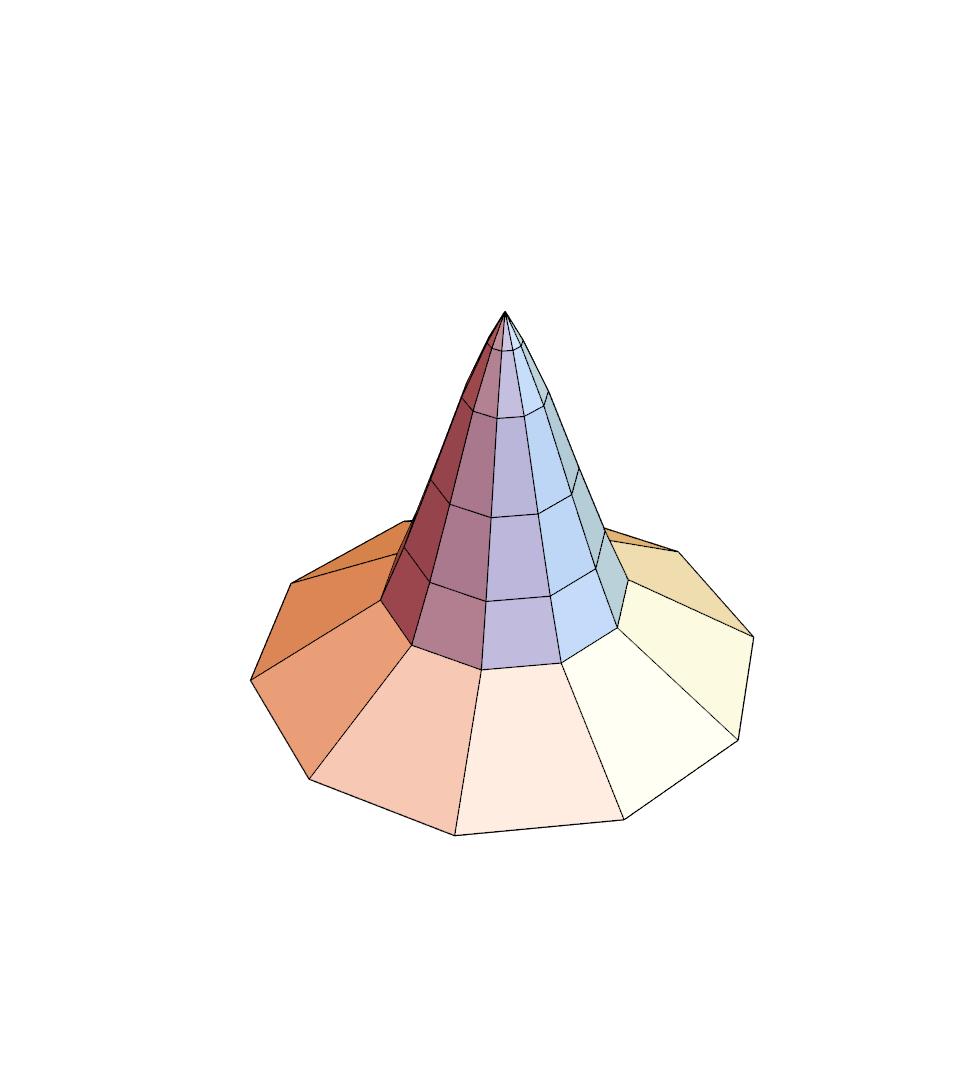}
\end{minipage}
\begin{minipage}[t]{0.17\textwidth}
 \includegraphics[trim= 2.0cm 0cm 2.0cm 0cm, clip, width=2.2cm]{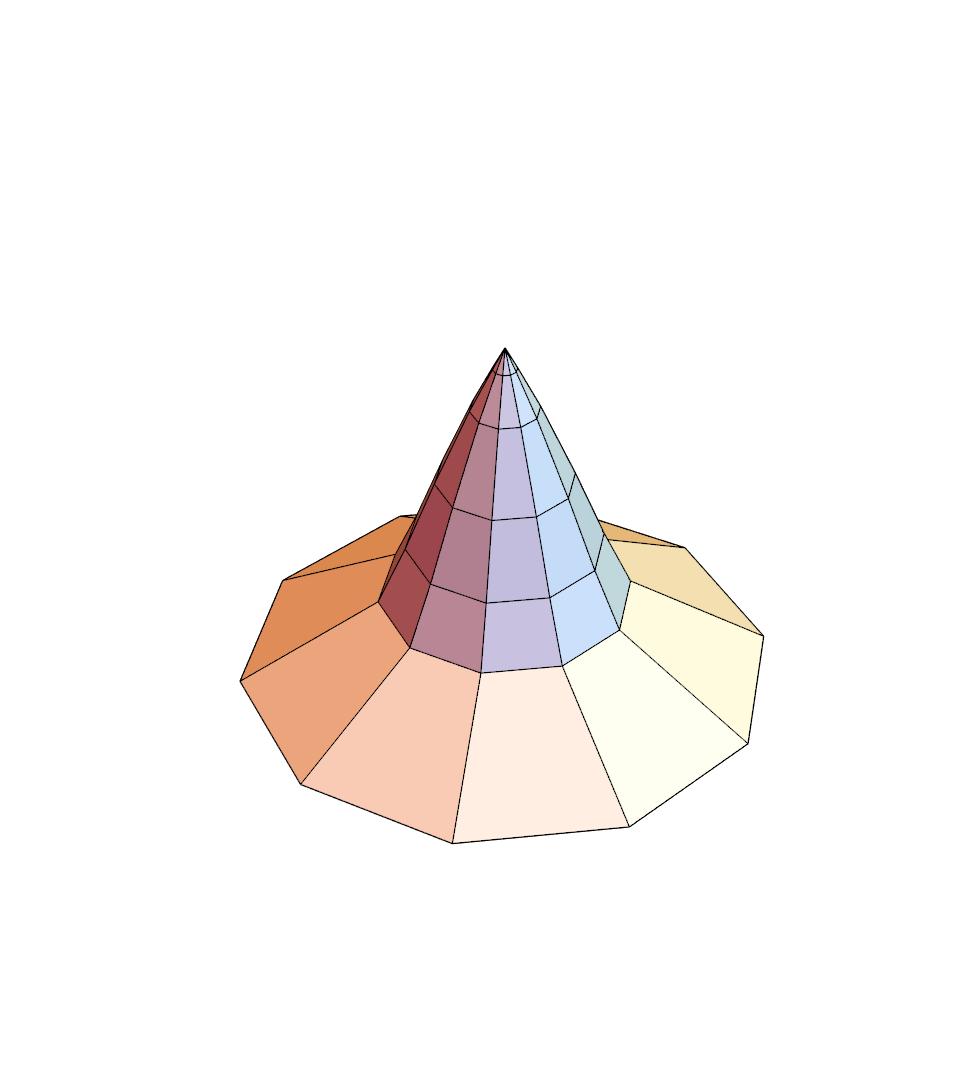}
\end{minipage}
\begin{minipage}[t]{0.16\textwidth}
 \includegraphics[trim= 2.0cm 0cm 2.0cm 0cm, clip, width=2.2cm]{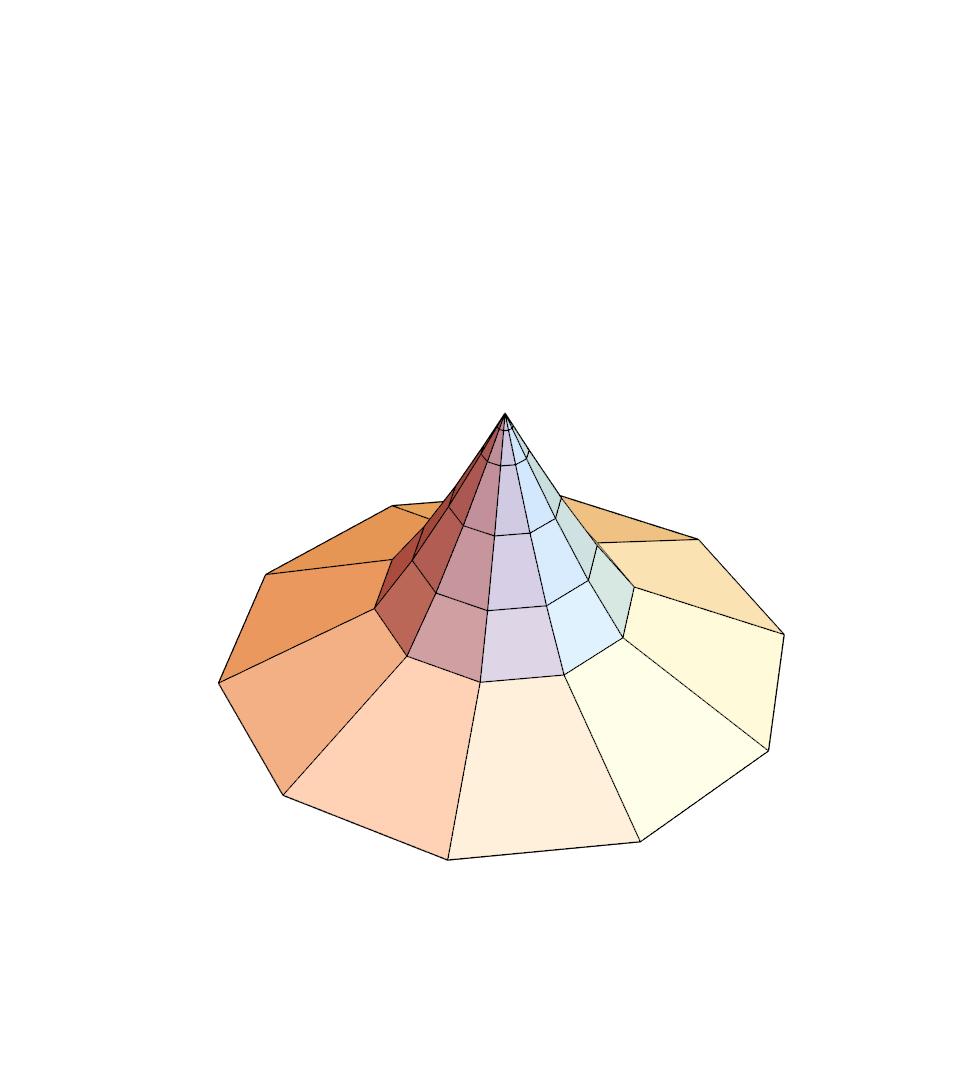}
\end{minipage}
\caption{A discrete surface moving under normalized Ricci flow \eqref{eq:flow6} toward a negative CGC surface with a cone, from left to right..}
\label{fig:negf}
\end{figure}

\noindent \textbf{Normalized Ricci flow toward positive CGC surfaces with cusps.}
\begin{equation}
\left\{ \,
\begin{aligned}
&\frac{\partial}{\partial t}g_{11}(n,t)=(r(t)-2K(n,t))g_{11}(n,t),\\
&\frac{\partial}{\partial t}g_{22}(n,t)=(r(t)-2K(n,t))g_{22}(n,t)\;\;(n=0, \cdots ,k-1),\\
&a(0,t)=1\;\;(b(0,t)=0),\\
&h(0,t)=0,\\
&b(k,t)=1\;\;(a(k,t)=0). \label{eq:flow7}
\end{aligned}
\right.
\end{equation}
Here, we want to define normalized Ricci flow toward positive CGC sufaces with cusps. From the functions \eqref{eq:gaCGC1}, we set $a(0,t)=1$ and $b(0,t)=0$ for the same reason as in \eqref{eq:flow5}. We added the condition $h(0,t)=0$ for the same reason as in \eqref{eq:flow5}.
The condition $b(k,t)=1$ is added to avoid triangulation as in \eqref{eq:flow5} and \eqref{eq:flow6}. Since there exists $u_k$ such that $a(k)=0$ and $b(k)=1$ for the functions \eqref{eq:gaCGC1}, it is natural to set $b(k,t)=1$. The initialized surface $x(m,n,0)$ is expected to move toward a sphere or a barrel type CGC surface which has a cusp at the top. Figure \ref{fig:fixa} shows numerical results. We also can again confirm that $f(\cdot,t)$ and $h(\cdot,t)$ are approaching the functions for a CGC surface of revolution.

\begin{figure}[htbp]
 \hspace{-0.5cm}
 \centering
\begin{minipage}[t]{0.16\textwidth}
 \includegraphics[trim= 2.0cm 0cm 1.9cm 0cm, clip, width=2.0cm]{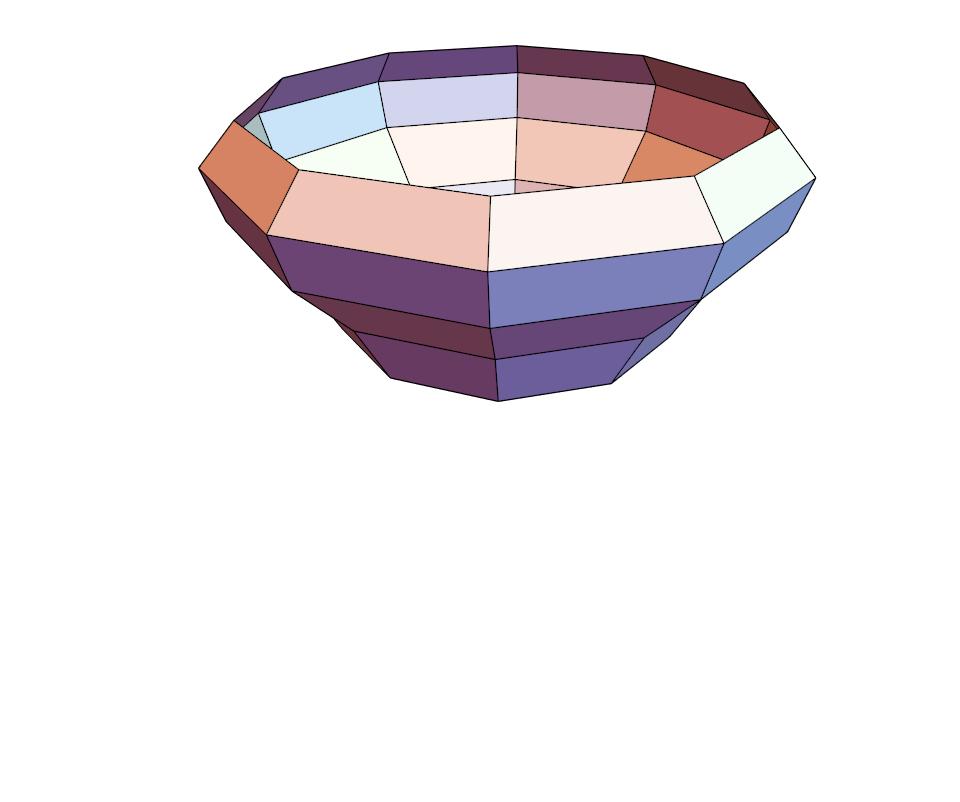} 
\end{minipage}
 \vspace{-0.7cm}
\begin{minipage}[t]{0.16\textwidth}
 \includegraphics[trim= 2.0cm 0cm 2.0cm 0cm, clip, width=2.0cm]{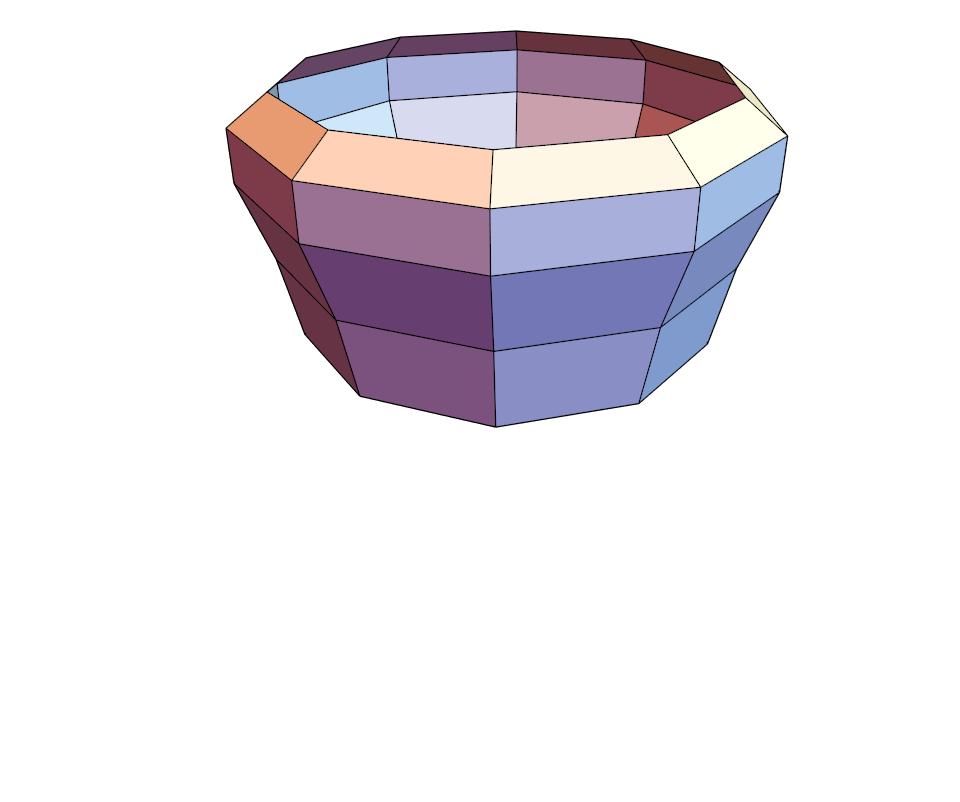} 
\end{minipage}
\begin{minipage}[t]{0.16\textwidth}
 \includegraphics[trim= 2.0cm 0cm 2.0cm 0cm, clip, width=2.0cm]{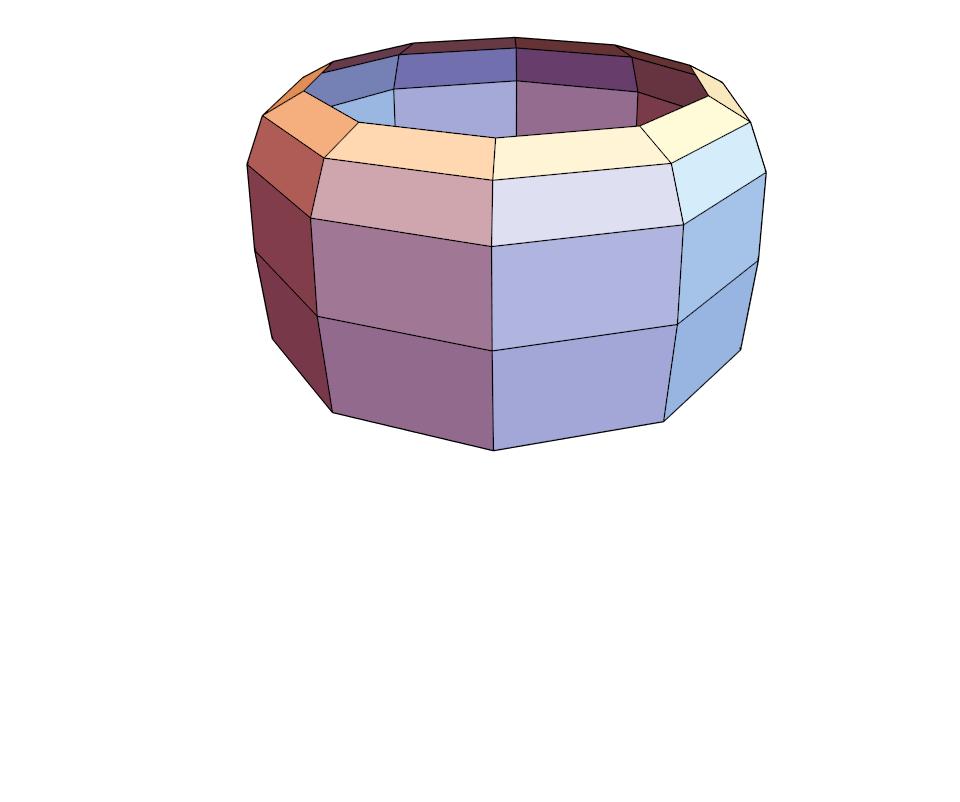}
\end{minipage}
\begin{minipage}[t]{0.16\textwidth}
 \includegraphics[trim= 2.0cm 0cm 2.0cm 0cm, clip, width=2.0cm]{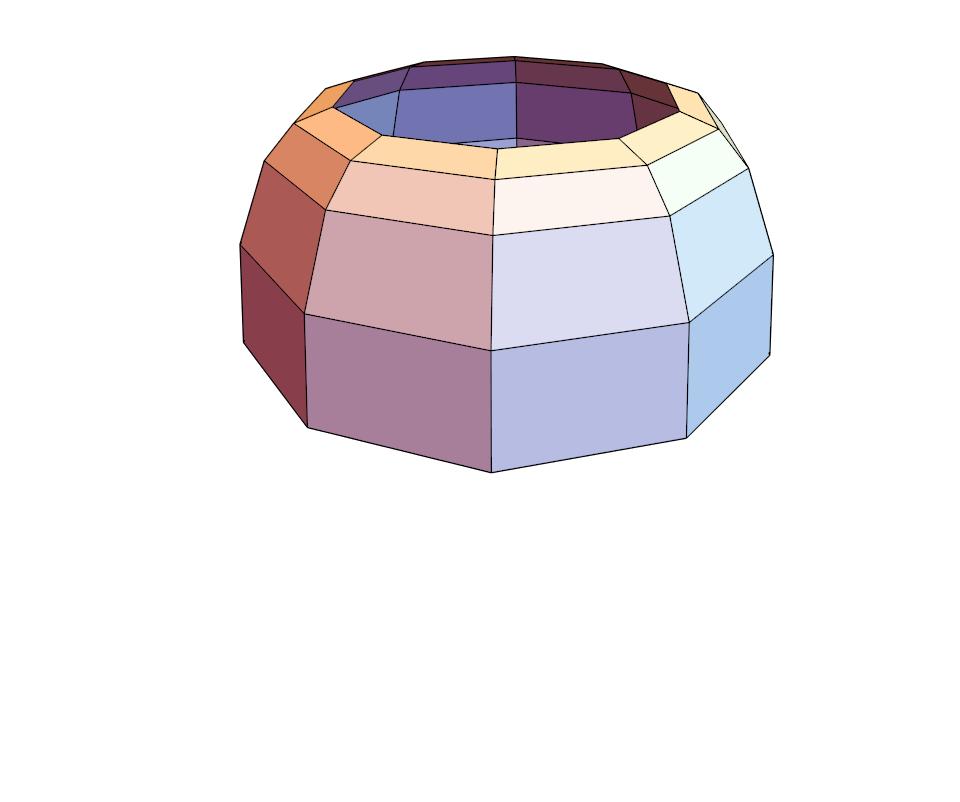}
\end{minipage}
\begin{minipage}[t]{0.16\textwidth}
 \includegraphics[trim= 2.0cm 0cm 2.0cm 0cm, clip, width=2.0cm]{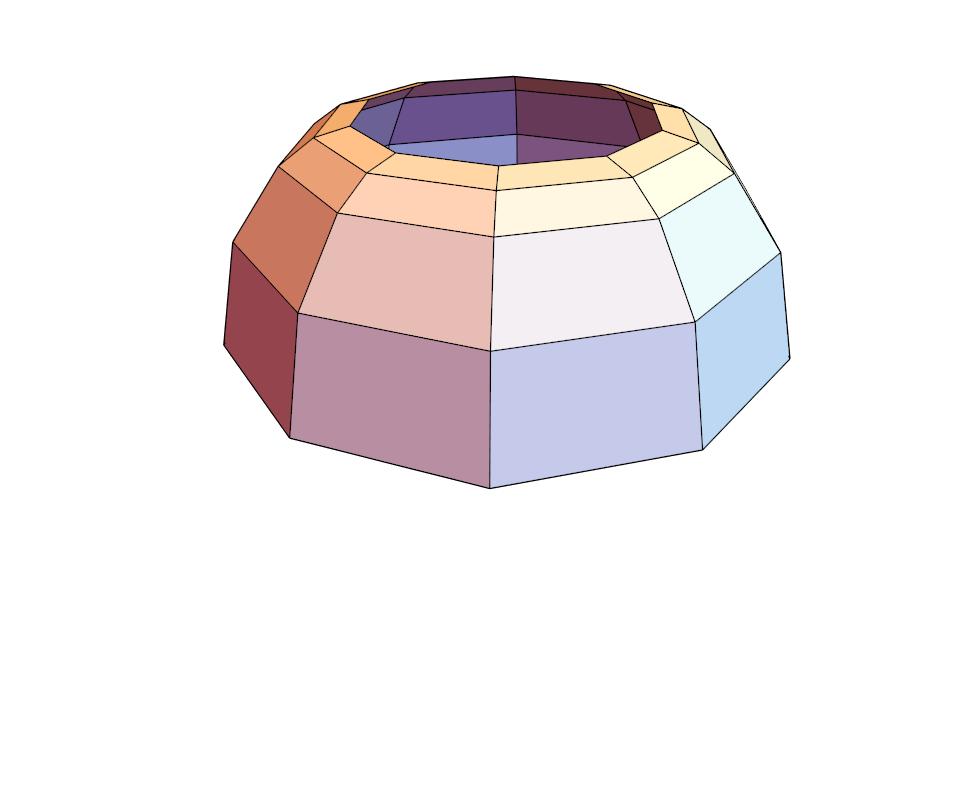}
\end{minipage}
\begin{minipage}[t]{0.16\textwidth}
 \includegraphics[trim= 2.0cm 0cm 2.0cm 0cm, clip, width=2.0cm]{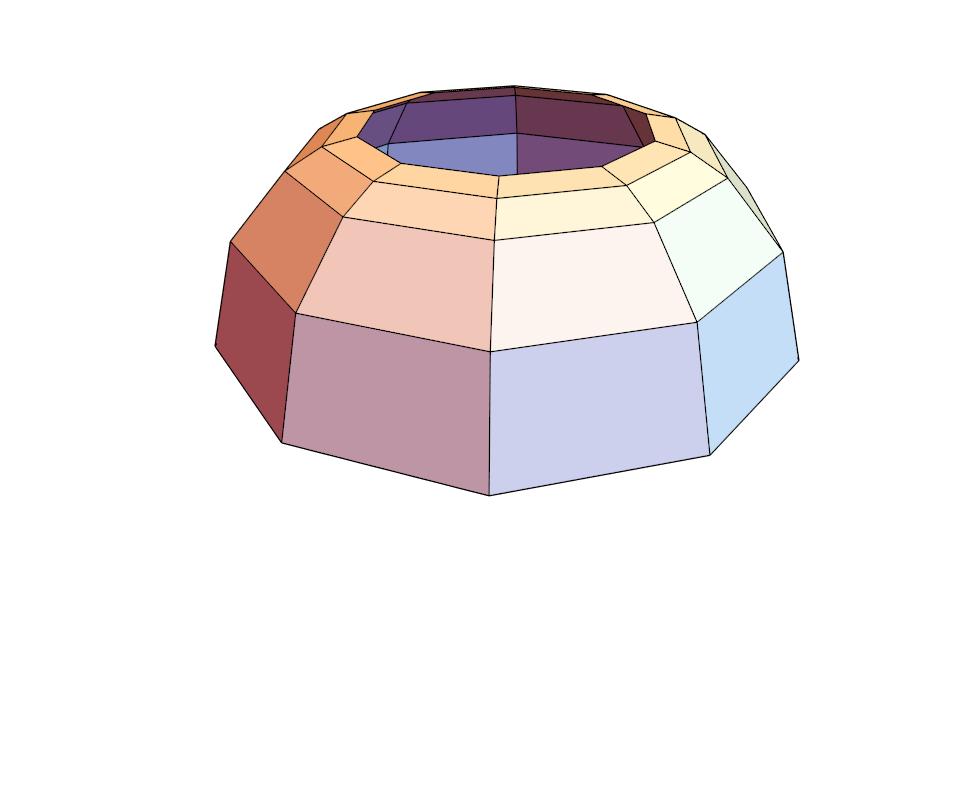}
\end{minipage}
\caption{A discrete surface moving under the normalized Ricci flow \eqref{eq:flow7} toward a positive CGC surface with a cusp, from left to right.}
\label{fig:fixa}
\end{figure}

\noindent \textbf{Normalized Ricci flow toward negative CGC surfaces with cusps.}
\begin{equation}
\left\{ \,
\begin{aligned}
&\frac{\partial}{\partial t}g_{11}(n,t)=(r(t)-2K(n,t))g_{11}(n,t),\\
&\frac{\partial}{\partial t}g_{22}(n,t)=(r(t)-2K(n,t))g_{22}(n,t)\;\;(n=0, \cdots ,k-1),\\
&a(0,t)=1\;\;(b(0,t)=0),\\
&h(0,t)=0,\\
&b(k,t)=-1\;\;(a(k,t)=0). \label{eq:flow8}
\end{aligned}
\right.
\end{equation}
Here, we want to define normalized Ricci flow toward negative CGC sufaces with cusps. For the functions \eqref{eq:gaCGC-1c}, when we choose $u_0=0$, we have $a(0)=1$ and $b(0)=0$. So we set $a(0,t)=1$ and $b(0,t)=0$ at $n=0$. We add the condition $h(0,t)=0$ for the same reason as in \eqref{eq:flow5}.
The condition $b(k,t)=-1$ is also added for the same reason as in \eqref{eq:flow7}. The initialized surface $x(m,n,0)$ is expected to move toward an anvil-shaped CGC surface which has a cusp at the top. Figure \ref{fig:nega} shows numerical results.

\begin{figure}[htbp]
 \hspace{-0.5cm}
 \centering
\begin{minipage}[t]{0.16\textwidth}
 \includegraphics[width=2.5cm]{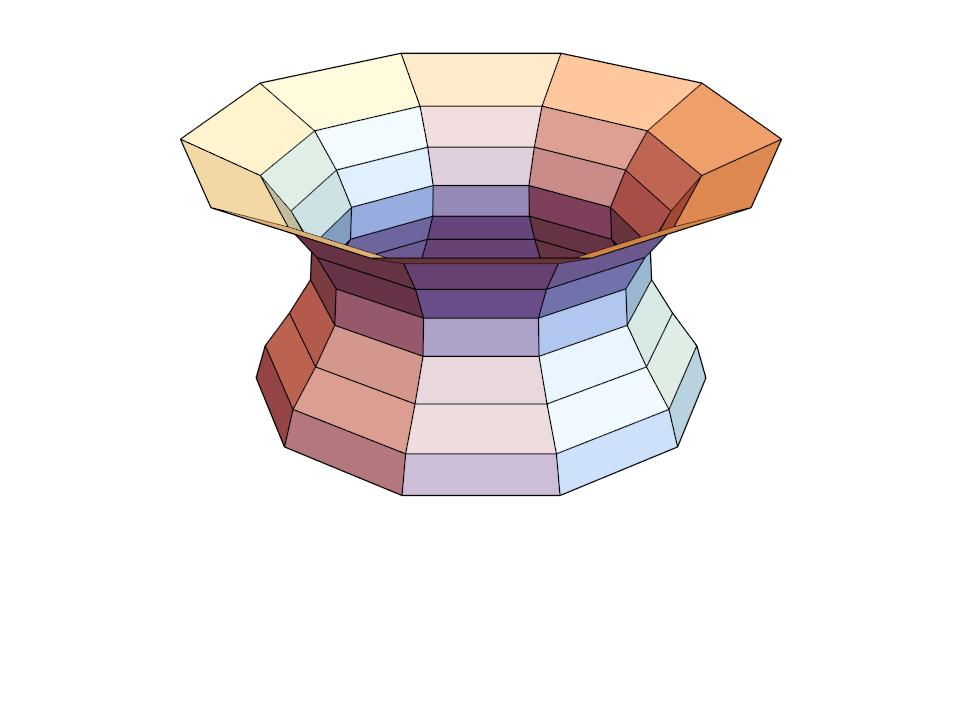} 
\end{minipage}
\vspace{-0.3cm}
\begin{minipage}[t]{0.16\textwidth}
 \includegraphics[width=2.5cm]{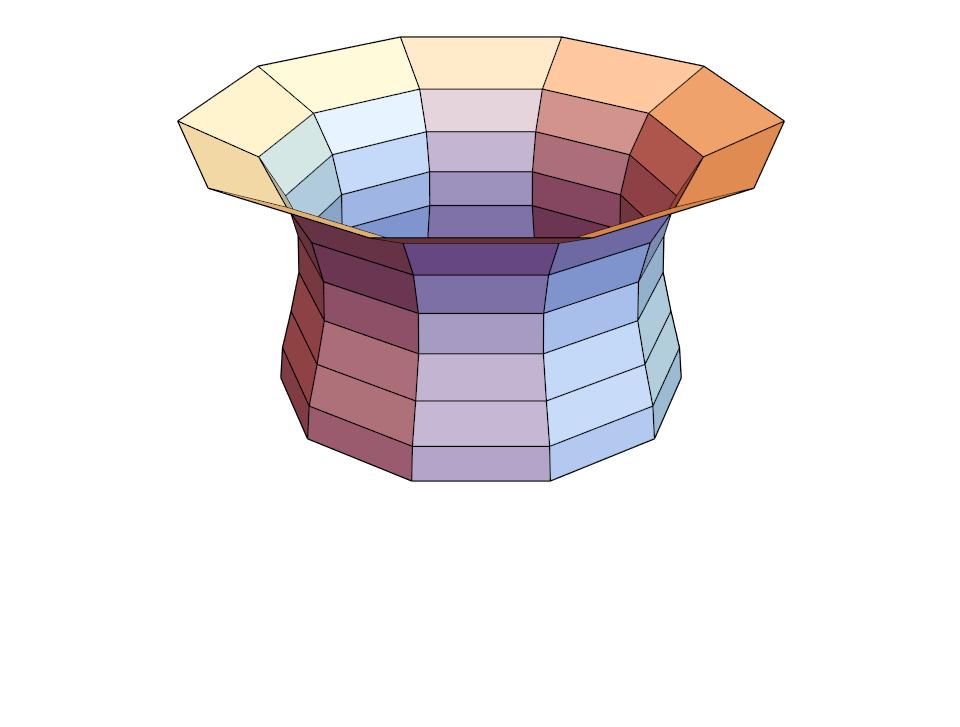} 
\end{minipage}
\begin{minipage}[t]{0.16\textwidth}
 \includegraphics[width=2.5cm]{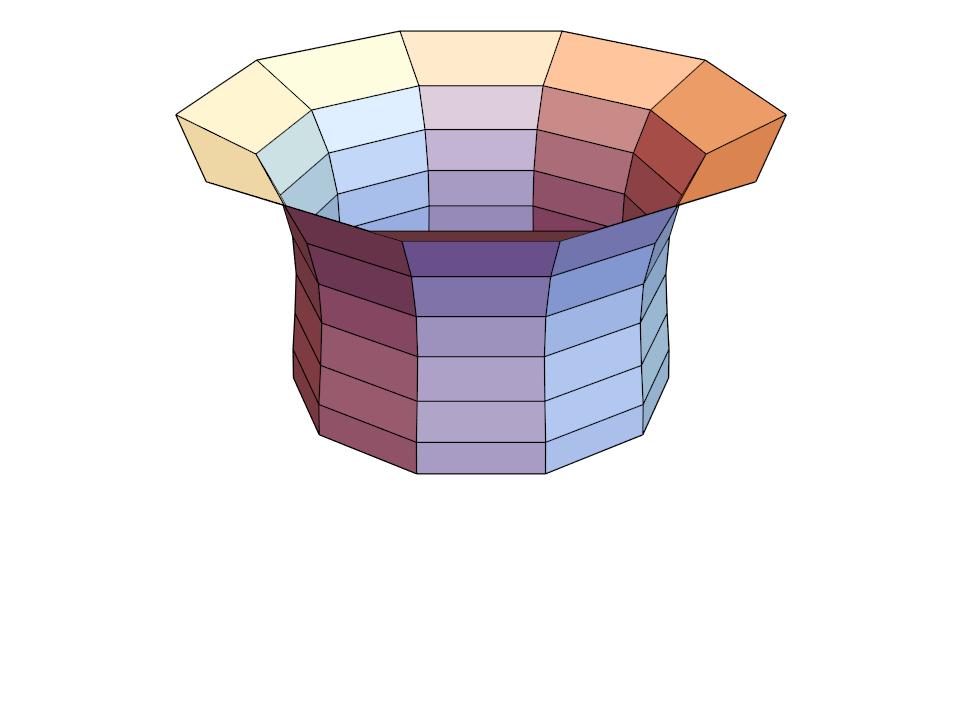}
\end{minipage}
\begin{minipage}[t]{0.17\textwidth}
 \includegraphics[width=2.5cm]{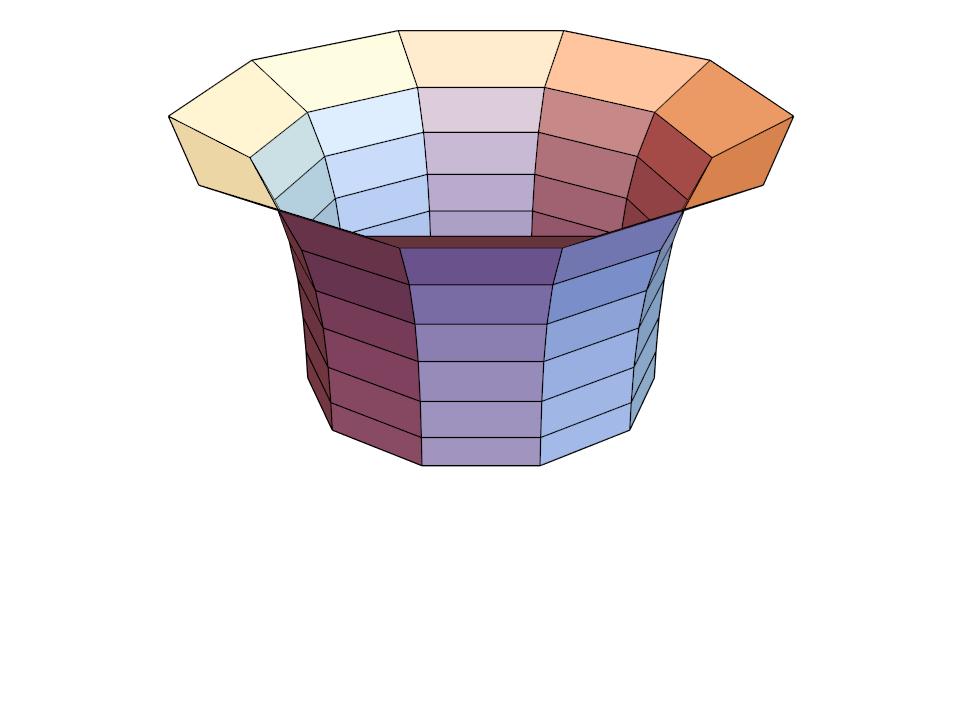}
\end{minipage}
\begin{minipage}[t]{0.17\textwidth}
 \includegraphics[width=2.5cm]{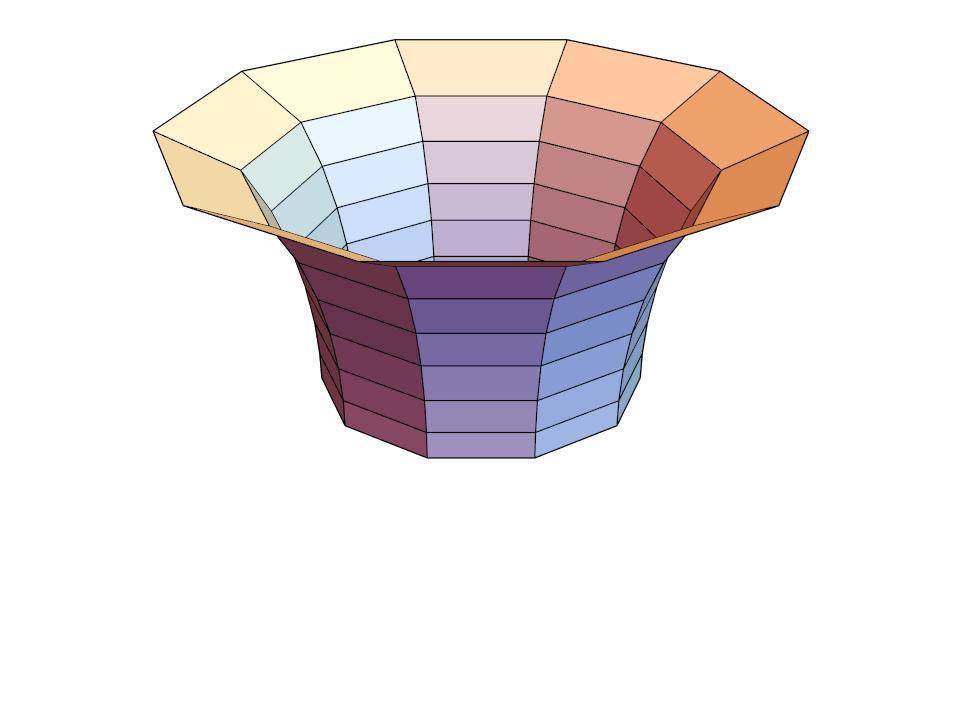}
\end{minipage}
\begin{minipage}[t]{0.16\textwidth}
 \includegraphics[width=2.5cm]{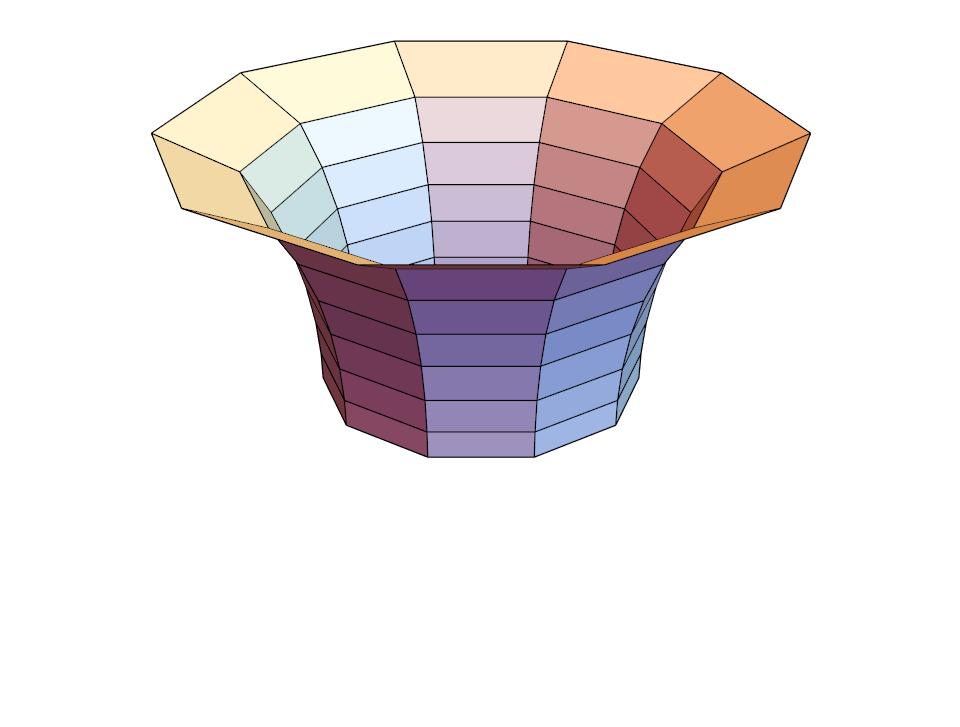}
\end{minipage}
\caption{A discrete surface moving under the normalized Ricci flow \eqref{eq:flow8} toward a negative CGC surface with a cusp, from left to right.}
\label{fig:nega}
\end{figure}

\section{Existence and uniqueness of normalized Ricci flow}
First, we show that the normalized Ricci flow preserves area in the discrete case as well.
\begin{lem}
If $f(n,t)$, $h(n,t)$ satisfy any of \eqref{eq:flow5}, \eqref{eq:flow6}, \eqref{eq:flow7} or \eqref{eq:flow8}, then the area $\sum_{0\le i\le k-1}A(x)(i,t)$ is constant with respect to $t$. \label{lem:area}
\end{lem}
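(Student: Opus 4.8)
The plan is to show that the time-derivative of the total area $\sum_{0\le i\le k-1}A(x)(i,t)$ vanishes identically. By the remark following \eqref{eq:curv}, each face area equals $A(x)(n)=\sqrt{g_{11}(n,t)\,g_{22}(n,t)}$, so I would differentiate this expression using the product and chain rules to obtain
\begin{equation*}
\frac{\partial}{\partial t}A(x)(n,t)=\frac{1}{2\sqrt{g_{11}g_{22}}}\Bigl(\frac{\partial g_{11}}{\partial t}g_{22}+g_{11}\frac{\partial g_{22}}{\partial t}\Bigr).
\end{equation*}
The key observation is that all four flows \eqref{eq:flow5}, \eqref{eq:flow6}, \eqref{eq:flow7}, \eqref{eq:flow8} share the same evolution equations for the metric, namely the normalized Ricci flow \eqref{eq:flow4}; they differ only in the boundary/normalization conditions on $a$, $b$, $h$ and $f$ at $n=0$ and $n=k$, which do not affect the metric evolution on the range $n=0,\dots,k-1$. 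Substituting $\partial_t g_{11}=(r-2K)g_{11}$ and $\partial_t g_{22}=(r-2K)g_{22}$ into the displayed formula, both terms in the parenthesis carry a common factor $(r-2K)$, and I expect the expression to collapse to
\begin{equation*}
\frac{\partial}{\partial t}A(x)(n,t)=(r(t)-2K(n,t))\sqrt{g_{11}g_{22}}=(r(t)-2K(n,t))A(x)(n,t).
\end{equation*}

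Next I would sum this over $n=0,\dots,k-1$ and invoke the definition of $r(t)$. Writing out the sum gives
\begin{equation*}
\frac{\partial}{\partial t}\sum_{0\le i\le k-1}A(x)(i,t)=\sum_{0\le i\le k-1}(r(t)-2K(i,t))A(x)(i,t)=r(t)\sum_{0\le i\le k-1}A(x)(i,t)-\sum_{0\le i\le k-1}2K(i,t)A(x)(i,t).
\end{equation*}
Since $r(t)$ is defined precisely as the area-weighted average $r(t)=\bigl(\sum 2K(i,t)A(x)(i,t)\bigr)/\bigl(\sum A(x)(i,t)\bigr)$, the first term equals $\sum_{0\le i\le k-1}2K(i,t)A(x)(i,t)$, which exactly cancels the second term. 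Hence the total derivative is zero, and the total area is constant in $t$.

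The argument is essentially a direct computation, so there is no deep obstacle; the only point requiring care is the factorization step, where I must verify that differentiating $\sqrt{g_{11}g_{22}}$ genuinely produces the common factor $(r-2K)$ and not cross terms. This works precisely because both metric components obey the \emph{same} conformal scaling factor $(r-2K)$, so the logarithmic derivative $\partial_t\log A=\tfrac12(\partial_t\log g_{11}+\partial_t\log g_{22})=\tfrac12\bigl((r-2K)+(r-2K)\bigr)=r-2K$ is clean. I would also note explicitly that the differing boundary conditions among the four flows play no role here, so that the single computation covers all four cases simultaneously.
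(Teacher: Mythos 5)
Your proposal is correct and follows essentially the same route as the paper's proof: differentiate $A(x)(i,t)=\sqrt{g_{11}g_{22}}$ to get $\frac{d}{dt}A(x)(i,t)=(r(t)-2K(i,t))A(x)(i,t)$, sum over the layers, and use the definition of $r(t)$ to see the cancellation. Your additional remark that the four flows share the same metric evolution, so the boundary conditions are irrelevant to the computation, is implicit in the paper's proof and is a fine point to make explicit.
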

\begin{proof}
Since 
$$A(x)(i,t)=\sqrt{g_{11}(i,t)g_{22}(i,t)},$$
we have
$$\frac{d}{dt}A(x)(i,t)=\frac{g_{11}\frac{d}{dt}g_{22}+g_{22}\frac{d}{dt}g_{11}}{2\sqrt{g_{11}g_{22}}}=(r(t)-2K(i,t))A(x)(i,t).$$
Therefore,
\begin{equation*}
\begin{split}
\frac{d}{dt}\sum_{0\le i\le k-1}A(x)(i,t)&=\sum_{0\le i\le k-1}(r(t)-2K(i,t))A(x)(i,t)\\ &=\frac{\sum_{0\le i\le k-1}2K(i,t)A(x)(i,t)}{\sum_{0\le i\le k-1}A(x)(i,t)}\sum_{0\le i\le k-1}A(x)(i,t) \\
&\quad-\sum_{0\le i\le k-1}2K(i,t)A(x)(i,t)\\
&=0.
\end{split}
\end{equation*}
\end{proof}

Now we describe how to determine the differential equations and their solutions. First, to define the unit normal vector, we need the condition for the unit normal vector at one vertex. The conditons for $a(0,t)$ or $b(0,t)$ provide this. Next, we describe the conditon for a solution to be determined. Since the equation $\frac{\partial}{\partial t}g_{ij}(n,t)=(r(t)-2K(n,t))g_{ij}(n,t)$ is defined for each layer, we have $2k$ equations. We note that the Gaussian curvature $K$ and the metric can be written in terms of
$$(f(0,t), \cdots ,f(k,t), h(1,t)-h(0,t), \cdots ,h(k,t)-h(k-1,t)).$$
When we consider the difference of $h$ as one function, the number of the above functions is $2k+1$. Therefore one more equation is required for the system to have determined solutions, and that equation is $f(k,t)=0$ in \eqref{eq:flow5} and \eqref{eq:flow6}. In \eqref{eq:flow7} and \eqref{eq:flow8}, $b(k,t)=1$ or $b(k,t)=-1$ are that condition. This is because $b(k,t)$ is determined by the differences of $f$ and $h$. The remaining condition $h(0,t)=0$ determines the values of $h(n,t)$. When we define
$$X(t):=(f(0,t), \cdots ,f(k,t), h(1,t)-h(0,t), \cdots ,h(k,t)-h(k-1,t)),$$
The four systems of equations in Section $6$ minus the condition $h(0,t)=0$ can be written as
\begin{equation}
\frac{d}{dt}X(t)=F(X(t)), \label{eq:flow9}
\end{equation}
where $F:\mathbb{R}^{2k+1} \rightarrow \mathbb{R}^{2k+1}$ is a particular mapping. As for \eqref{eq:flow5}, finding $F$ is not difficult:
\begin{multline*}
\frac{d}{dt}f(n,t)=\sum_{i=1}^{k-n-1}(-1)^{i-1}f(n+i,t)(K(n+i,t)-K(n+i-1,t))\\
+\frac{1}{2}(r(t)-2K(n,t))f(n,t)\;\;(n=0,\cdots,k-1),
\end{multline*}
$$\frac{d}{dt}f(k,t)=0,\;\;\frac{d}{dt}h(0,t)=0,$$
\begin{multline*}
\frac{d}{dt}(h(n+1,t)-h(n,t))=\sum_{i=1}^{k-n-1}2(-1)^{i}f(n+i,t)\frac{f(n+1,t)-f(n,t)}{h(n+1,t)-h(n,t)}\times \\
(K(n+i,t)-K(n-1+i,t))\cos^2\frac{\pi}{l}+\frac{1}{2}(r(t)-2K(n,t))(h(n+1,t)-h(n,t))\\
(n=0,\cdots,k-1).
\end{multline*}
It is possible to find $F$ for the three other systems of equations as well. Since \eqref{eq:flow9} is a system of $2k+1$ first order equations, there is a theorem for the existence and uniqueness of the solution of \eqref{eq:flow9} (see \cite{Arnold}).

\begin{fact}
A solution of the differential equation $\frac{d}{dt}X=F(t,X)$ with the initial condition $(t_0,X_0)$ in the domain of smoothness of the right-hand side exists and is unique.
\end{fact}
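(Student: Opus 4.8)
The statement is the classical Picard--Lindel\"of (Cauchy--Lipschitz) existence and uniqueness theorem, and the plan is to prove it by recasting the initial value problem as a fixed-point equation and invoking the contraction mapping principle. First I would integrate $\frac{d}{dt}X = F(t,X)$ with $X(t_0)=X_0$ to obtain the equivalent integral equation $X(t) = X_0 + \int_{t_0}^t F(s,X(s))\,ds$. A continuous solution of this integral equation is automatically $C^1$ (its right-hand side is continuous in $t$) and solves the original ODE, and conversely, so it suffices to produce a unique continuous solution of the integral equation.

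Next I would exploit the hypothesis that $F$ is smooth on its domain. Fix a closed box $R=\{(t,X):|t-t_0|\le\alpha,\ \|X-X_0\|\le\beta\}$ contained in that domain. On the compact set $R$ the map $F$ is bounded, say $\|F\|\le M$, and since its partial derivatives in $X$ are continuous, the mean value theorem yields a uniform Lipschitz estimate $\|F(t,X)-F(t,Y)\|\le L\|X-Y\|$ for $(t,X),(t,Y)\in R$, with $L=\sup_R\|D_X F\|$. I would then set $\delta=\min\{\alpha,\beta/M,1/(2L)\}$ and work in the complete metric space $\mathcal{C}$ of continuous maps $\phi\colon[t_0-\delta,t_0+\delta]\to\overline{B}(X_0,\beta)$ equipped with the sup norm. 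The Picard operator $(T\phi)(t)=X_0+\int_{t_0}^t F(s,\phi(s))\,ds$ maps $\mathcal{C}$ into itself (the choice $\delta\le\beta/M$ keeps the image inside the ball) and is a contraction (the choice $\delta\le 1/(2L)$ gives contraction factor $L\delta\le 1/2$). By the Banach fixed-point theorem $T$ has a unique fixed point in $\mathcal{C}$, which is the unique local solution.

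Finally, I would upgrade local uniqueness to uniqueness on any common interval of existence by a Gr\"onwall argument: if $X$ and $Y$ both solve the initial value problem, then $\|X(t)-Y(t)\|\le L\int_{t_0}^t\|X(s)-Y(s)\|\,ds$, and Gr\"onwall's inequality forces $X\equiv Y$. I expect the main obstacle to be the second step, namely converting the smoothness hypothesis into a workable uniform Lipschitz constant and then balancing the two competing constraints on $\delta$ (staying inside the box versus achieving contraction). In our application the right-hand side of \eqref{eq:flow9} is a rational-type expression in $f$ and the differences of $h$, so one must additionally verify that the relevant initial data lie in the domain of smoothness of $F$ --- in particular that denominators such as $h(n+1,t)-h(n,t)$ do not vanish --- which is exactly the content of the phrase \emph{in the domain of smoothness of the right-hand side}.
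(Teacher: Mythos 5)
This Fact is stated in the paper without proof: it is the classical existence--uniqueness theorem for ODE systems, quoted from Arnold's textbook \cite{Arnold} precisely so that it can be applied to the $(2k+1)$-dimensional system \eqref{eq:flow9}. So there is no internal proof to compare against, and the right question is whether your argument is a correct proof of the quoted theorem --- it is. Your proposal is the standard Picard--Lindel\"of argument (integral-equation reformulation, uniform Lipschitz bound from $\sup_R\|D_XF\|$ on a compact box, Banach fixed point for the Picard operator, Gr\"onwall for uniqueness), which is also essentially how the cited source proves it, via contraction of the Picard map. One detail deserves care in your last step: the Lipschitz constant $L$ you produced is only valid on the box $R$, so to run Gr\"onwall on an arbitrary common interval of existence you need a Lipschitz constant valid on a compact set containing the graphs of both solutions; this follows because a map that is smooth (hence locally Lipschitz) on an open set is Lipschitz on every compact subset (cover by Lipschitz balls plus a Lebesgue-number argument), or alternatively one can globalize local uniqueness by observing that the set of times where the two solutions agree is nonempty, open, and closed in the interval. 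Your closing observation is also exactly on point for this paper: the phrase ``in the domain of smoothness of the right-hand side'' is what makes the Fact applicable to \eqref{eq:flow9}, where the explicit $F$ written for \eqref{eq:flow5} involves quotients, so one must check that quantities such as $h(n+1,t)-h(n,t)$ stay nonzero along the flow for the hypotheses to hold.
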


\begin{exa}
Finally, we consider the case that the initialized surface is a sphere, that is, 
$$x(m,n,0)=\Bigr\{\cos \frac{\pi n}{2k}\cos \frac{2\pi m}{l}, \cos \frac{\pi n}{2k}\sin \frac{2\pi m}{l}, \sin \frac{\pi n}{2k}\Bigr\}.$$
Then $f(n,0)=\cos \frac{\pi n}{2k}$ and $h(n,0)=\sin \frac{\pi n}{2k}$. When we set $a(0,0)=1$ and $b(0,0)=0$, this sphere has constant Gaussian curvature $K=1$. Moreover, since we have $f(k,0)=0$ and $a(k,0)=0$, we can consider both the flows \eqref{eq:flow5} and \eqref{eq:flow7}. We will find the solutions for \eqref{eq:flow5} and \eqref{eq:flow7} with initialized surface the above sphere are again that very sphere. When 
\begin{equation}
\left\{ \,
\begin{aligned}
&f(n,t)=\rho(t)f(n,0),\\
&h(n+1,t)-h(n,t)=\rho(t)(h(n+1,0)-h(n,0))\;\;(n=0, \cdots ,k-1),
\end{aligned}
\right. \label{eq:rho}
\end{equation}
$g_{11}(n,t)=\rho(t)^2g_{11}(n,0)$ and $g_{22}(n,t)=\rho(t)^2g_{22}(n,0)$ hold. Furthermore, then $a(n,t)=a(n,0)$, hence we have $K(n,t)=\frac{1}{\rho(t)^2}K(n,0)=\frac{1}{\rho(t)^2}$ and $r(t)=\frac{2}{\rho(t)^2}$. Substituting these into 
\begin{equation*}
\left\{ \,
\begin{aligned}
&\frac{\partial}{\partial t}g_{11}(n,t)=(r(t)-2K(n,t))g_{11}(n,t),\\
&\frac{\partial}{\partial t}g_{22}(n,t)=(r(t)-2K(n,t))g_{22}(n,t)\;\;(n=0, \cdots ,k-1),
\end{aligned}
\right.
\end{equation*}
we have $\rho(t)=1$. By uniqueness of the solution and the condition $h(0,t)=0$, the solution satisfies $f(n,t)=f(n,0)$ and $h(n,t)=h(n,0)$. 

We can also consider the unnormalized version for \eqref{eq:flow5} and \eqref{eq:flow7}, that is, the flow \eqref{eq:flow3.5} with conditions which are 
\begin{equation*}
a(0,t)=1,\;\;f(k,t)=0,\;\;h(0,t)=0
\end{equation*}
or
\begin{equation*}
a(0,t)=1,\;\;b(k,t)=1,\;\;h(0,t)=0.
\end{equation*}
In this case also, if we set $\rho(t)$ as in \eqref{eq:rho}, we have $\rho(t)=\sqrt{1-2t}$. 

These are normalized and unnormalized Ricci flow for discrete spheres.
\end{exa}

\newpage

\end{document}